\newcommand{\n}{\noindent}
\theoremstyle{plain} 
\newtheorem{theorem}{Theorem}[section]
\newtheorem{proposition}{Proposition}[section]
\newtheorem{lemma}{Lemma}[section]
\newtheorem{definition}{Definition}[section]
\newtheorem{remark}{Remark}[section]
\begin{document}

\title{Modeling and study a general vacation queuing system with impatience customers}
\author{Assia Boumahdaf \footnote{E-mail: assia.boumahdaf@gmail.com}}
\maketitle

\begin{abstract}
In this paper we model and study a general vacation queueing model with impatient customers. We first propose a sufficient condition for the existence of the stationary workload process. We then give an integral equation for the independent and identically distributed case. This integral equation is solved when customers arrive according to a Poisson point process. A relationship between the tail of the waiting time distribution and the tail of service distribution is also given.
\end{abstract}

\section{Introduction}
\label{introduction}

We introduce and study a new model of a vacation queueing model with impatient customers. 
Such models may be used to describe packet switching networks involving real-time applications. 
These applications are characterized by a data transmission time within a very short time.
Moreover, such systems transmit packets flows that come from different sources and are multiplexed in a common queue, resulting in a delay transmission (the waiting time). 
This delay transmission, is one of the main elements of quality of service (QoS) in such systems. 
When we are concerned with the performance of one particular stream, the other flows (waiting in the same queue) may be regarded as secondary flows. 
So, the server may be regarded as a server on vacation whenever it transmits these secondary flows. 
We will focus specifically on modelling the workload and studying the performance of a vacation queueing model with impatient customers. 
We will model the workload process by a stochastic recursive sequence (SRS).
The workload at time $n$ is defined as the amount of work remaining to be done by the server at time $n$. 
This equation is described by the recursion $W_{n+1} = h(W_n,\xi_n)$, where the random variable $W_{n}$ denotes the workload viewed by the $n$th customer, $\{\xi_n\}$ is some stationary random variable, and $h(.)$ some deterministic and known function.   
Such a model, has never been considered before in the literature. 
Recursions of this type which may see as a generalisation of Lindley's recursion \cite{Lindley52} is one of the fundamental and most well-studied equations in queuing theory. 
For earlier works on such stochastic recursions, see for example \cite{BaccelliBremaud}, \cite{BorovkovFoss1992} or \cite{Brandtbook}. 
The first important issue that we consider is the stability of the system. 
We will consider the problem of the existence and uniqueness of a stationary version of the workload process by using the theory of renovating events of Borovkov (\cite{Borovkov78}, \cite{Borovkovbook} ) in the general case where the underlying processes are stationary. 
Such theory have been used in the context of queues with impatience by \cite{Moyal2010}. 
Under stability conditions and independence assumptions, an integral equation for the stationary probability density function (p.d.f.) of the workload is derived. 
Using the Laplace transform approach a closed-form expression for the steady-state limiting distribution of the workload given when the customers arrive according to a Poisson process and the patience times are exponentially distributed. 
If one cannot determine analytically the limiting distribution function of the waiting time it is of interest of studying the tail behaviour using informations from the given distributions.

Queueing models with impatience and vacation times have been largely studied. 
The phenomena of impatience is also referred in the applied probability literature as limited waiting (or sojourn) times. Most of these works using integral equation approach, focus on the $M/G/1$ queueing models with a general vacation distribution and constant deadline. 
The study of these queueing systems have been initiated by \cite{vanderDuynSchouten78}.
The author derives the joint stationary distribution of the workload and the state of the server (available for service or on vacations). 
\cite{TakineHasegawa90} have considered two $M/G/1$ with balking customers and deterministic deadline on the waiting and sojourn times. 
They obtain an integral equation for the steady state probability distribution function of the waiting times and the sojourn times. 
They expressed them in terms of steady state probability distribution function of the $M/G/1$ queue with vacations without deadline.
Recently, \cite{Katayama2011} has investigated the $ M/G/1 $ queue with multiple and single vacations, sojourn time limits and balking behaviour. 
Explicit solutions for the stationary virtual waiting time distribution are derived under various assumptions on the service time distribution.
\cite{Katayama2012} derives recursive equations in the case of deterministic service times for the steady-state distributions of the virtual waiting times in a $ M/G/1 $ queue with multiple and single vacations, sojourn time limits and balking behaviour. 

This paper is organised as follows. 
In Section~\ref{section 1}, we describe the model for the workload process. 
In Section~\ref{section 2}, we provide a sufficient condition for the existence and the uniqueness of the stationary workload. 
In Section~\ref{section 3}, an integral equation is established when customers arrive according to a Poisson process and the patience time is exponentially distributed.
In Section~\ref{section 4}, we derive an integral equation satisfied by the steady-state probability distribution function and a close-form expression of its Laplace transform is proposed. 
In Section~\ref{section 5}, we investigate the tail of the workload distribution.

\section{Description of the model}
\label{section 1}

\subsection{Description of the probability space}
\label{subscetion:description probability space}

Let $\left( \Omega, \mathscr{A}, \mathbb{P} \right)$ be a probability space. All random variables under consideration will be defined on $\left(\Omega, \mathscr{A}, \mathbb{P} \right)$. 
Let $\theta$ be a measurable map from  $\left(\Omega, \mathscr{A}\right)$ into itself such that $\mathbb{P}$ is $\theta$-invariant, that is $\mathbb{P}\left( \theta^{-1}(A) \right) = \mathbb{P}\left( A\right)$ for all $A \in \mathscr{A}$, where $\theta^{-1}$ denotes the measurable inverse of $\theta$. We assume that $\mathbb{P}$ is $\theta$-ergodic, \textit{i.e.}, for all $A \in \mathscr{A}$ that are $\theta$-invariant, that is $\theta^{-1}(A) = A$ then $\mathbb{P}(A) = 0$ or $1$. The iterates of the mapping $\theta$ are defined by composition, where $\theta^{0}$ is the identity map, and for $n \in \mathbb{N}$,
\begin{equation*}
\theta^{n} = \underbrace{\theta \circ \theta \circ \ldots \circ \theta}_{n \mbox{ times }}  \qquad \mbox{and} \qquad \theta^{-n} = \underbrace{\theta^{-1} \circ \theta^{-1} \circ \ldots \circ \theta^{-1}}_{n \mbox{ times }}.
\end{equation*}

We now proceed to describe our queueing system. We consider first-in-first-out queueing system with infinite waiting room in which customers enter at random times $\ldots, T_{-1} < T_0 < T_1 \ldots$. We denote for all $n \in \mathbb{Z}$, $ \tau_n = T_{n} - T_{n-1}$ the inter-arrival time between the $n$th customer and the $(n-1)$th one.
Customers are denoted in the all the sequel $\ldots,  C_{-1}, C_{0}, C_{1}, \ldots$. They require service times  $\left( \sigma_{n}, \, n \in \mathbb{Z} \right)$. Furthermore, they are assumed \textit{impatient}, that is, they require to reach the server before a given deadline $\left( D_n, \, n \in \mathbb{Z} \right)$ otherwise, he lives the system and never returns. More precisely, if a customer reaches the server before his deadline he remains until completion.
The workload upon arrival of customer $C_n$, denoted by $W_n$ is assumed to be known. Hence $C_n$ enters the system if and only if the $W_n < D_n$. If not, $C_n$ does not enter and never returns. Whenever a customer is patient $W_n$ correspond to his waiting time.
We assume furthermore, that the server is subject to service interruptions $\left( V_n, n \in \mathbb{Z}\right)$, called \textit{vacations}. When the system becomes idle, the server takes one vacation. If on return from  a vacation the system is not empty, the server begins service, otherwise it waits for the next arrival. 
\vspace{0.2cm}

We shall further assume that $\left( \Omega, \mathscr{A} \right)$ is the canonical space of the sequence \[ \Big( \left(T_n, \sigma_n, D_n, V_n \right), \, n \in \mathbb{Z} \Big). \] 
Specifically, we take $\Omega = (\mathbb{R}_{+}^4)^{\mathbb{Z}}$ and $\mathscr{A} = \mathscr{B}((\mathbb{R}_{+}^4)^{\mathbb{Z}})$, where $\mathscr{B}((\mathbb{R}_{+}^4)^{\mathbb{Z}})$ is the Borel sigma-field on $(\mathbb{R}_{+}^4)^{\mathbb{Z}}$.
If $\omega \in \Omega$ then $\omega := \Big( (\omega_k^1,\omega_k^2, \omega_k^3,\omega_k^4 ), \, k \in \mathbb{Z} \Big)$ and the shift operator is thus defined on $\Omega$ by
\begin{equation*}
 \theta(\omega) = \Big( (\omega_{k+1}^1,\omega_{k+1}^2, \omega_{k+1}^3,\omega_{k+1}^4 ), \, k \in \mathbb{Z} \Big).
\end{equation*}
For $\omega \in \Omega$, we define on $\left(\Omega, \mathscr{A} \right)$ the generic random variables
\begin{eqnarray*}
\tau(\omega)   &=& \tau\left( \omega_k, \, k \in \mathbb{Z} \right) := \omega_{0}^{1},\\ 
\sigma(\omega) &=& \tau\left( \omega_k, \, k \in \mathbb{Z} \right) := \omega_{0}^{2},\\
D(\omega)      &=& \tau\left( \omega_k, \, k \in \mathbb{Z} \right) := \omega_{0}^{3},\\
V(\omega)      &=& \tau\left( \omega_k, \, k \in \mathbb{Z} \right) := \omega_{0}^{4}.
\end{eqnarray*}

\n The random variables $\tau$, $\sigma$, $D$ and $V$ are interpreted as, respectively, the inter-arrival time starting from $0$, the service time of the customer $C_0$ arrived at time $0$, the deadline of $C_0$ and the vacation time of index $0$. Hence, for all $n \in \mathbb{Z}$, $\tau \circ \theta^n$, $\sigma \circ \theta^n$, $D \circ \theta^n$ and $V \circ \theta^n$ represent, respectively, the inter-arrival time between $C_n$ and $C_{n+1}$, the service time and the deadline of $C_n$, and the $n$th vacation period. The mapping $\theta$ passes from a customer to the following one in order of arrivals. Then, it follows from our readily assumptions that the sequence $\Big( (\tau_n, \sigma_n, D_n , V_n), \, n \in  \mathbb{Z}\Big)$ is stationary and ergodic.
We assume moreover that the random variables $\tau$, $\sigma$, $D$ and $V$ are integrable and that $\tau > 0$ $\mathbb{P}$-a.s..  We fix the time origin $T_0 = 0$ at the arrival time of $C_0$. The quadruplet $\left( \Omega, \mathscr{A},\mathbb{P},\theta \right)$ is usually called the \textit{Palm space} of the sequence $\Big( \left(T_n, \sigma_n, D_n, V_n \right), \, n \in \mathbb{Z} \Big)$.

\subsection{The workload sequence}
\label{subsection:workload process}

Let $W_{t}$, $t \in \mathbb{R}$ be the workload at time $t$, that is the amount of work remaining to be done by the server at time $t$. By convention, the \textit{workload process} $\{ W_t, t \in \mathbb{R} \}$ is taken right-continuous with left limit $W_{t^-}$, with $W_{0^-} = 0$. 
We define the \textit{workload sequence} $\left( W_n, n \in \mathbb{Z} \right)$ by $W_n = W_{T_{n}^-}$, for all $n \in \mathbb{Z}$.
The value of $W_n$ taken up to time $T_n$ represents the time that the customer $C_n$ would have to wait to reach the server. Under the First In First Out (FIFO) discipline, patient customers are those whose the deadline exceeds the workload upon their arrivals, \textit{i.e.}, the customer $C_n$ enters the system if and only if $W_n < D_n$. In literature of queueing system, we say that the customer balks upon arrival.
In the sequel, we will use the notation $\left( X_n^{X}, n \in \mathbb{Z} \right)$ to indicate that the initial condition of the sequence $\left( X_n, n \in \mathbb{Z} \right)$ is given by $X_0 = X$ $\mathbb{P}$-a.s..
The first issue, we are concerned with, is the representation of the workload sequence $\left( W_n^W, n \in \mathbb{Z}\right)$ by a \textit{stochastic recursive sequence} (SRS) that expresses the workload $W^{W}_{n+1}$ of the customer $C_{n+1}$ in terms of $W^{W}_n$. In the sequel, we omit the exponent if there is no confusion. The representation of the workload sequence by a SRS will allow us to provide stability condition under general assumptions of ergodicity. 
\vspace{0.3cm}

Before defining the model, let us start by introducing some definitions and notations. We define at first the random variable $X_n$ given by Equation~\eqref{chap2-proba:def variable Xn} below, and afterwards, the concept of $(m,n)$-family. In all the sequel, we assume that $W_0 = W$ $\mathbb{P}$-a.s., where $W$ is a non-negative random variable and that the customer $C_0$ reaches the server and remains in the system until completion of his service. Define the random variable $X_{n}$ by
\begin{equation}
\label{chap2-proba:def variable Xn}
  X_{n} := W_{n} + \sigma_{n}\mathbf{1}_{W_{n} < D_{n}} - \tau_{n}, \quad n \in \mathbb{Z}.
\end{equation}

\n As for the standard Lindley recursion we discuss possible configurations depending on the sign of $X_n$. Two possible situations occur: when $X_n > 0$ and when $X_n \leq 0$. 
When the event $\{ X_n > 0\}$ occurs, at time $T_n$ (the arrival time of $C_n$) the system is not idle; whereas on $\{ X_n \leq 0\}$ the system is idle at $T_n$.
\vspace{0.3cm}

$\bullet$ If $X_{n} > 0$, then two cases occur: when the customer $C_n$ is served (\textit{i.e.}, $W_n < D_n$) and when $C_n$ balks upon arrival ($W_n \geq D_n$). If $C_n$ is patient then $X_{n} = W_{n} + \sigma_{n} - \tau_{n}$, and thus the customer $C_{n+1}$ arrives during the sojourn time (the waiting plus the service time) of $C_{n}$. In such a case
\begin{equation*}
 W_{n+1} = W_n + \sigma_n - \tau_n.
\end{equation*}

\n Otherwise, if $C_{n}$ is impatient (he abandons the system upon his arrival) then $X_{n} = W_{n} - \tau_{n}$. In this case, $X_{n} \geq 0$ means that if $C_{n}$ had been patient, the customer $C_{n+1}$ would have arrived during the waiting time of $C_{n}$. Whatever the state of the server (in service or in vacation), provided that $C_n$ and $C_{n+1}$ arrive during the same state, the workload of the customer $C_{n+1}$ is given by
\begin{equation*}
 W_{n+1} = W_n  - \tau_n.
\end{equation*}

$\bullet$ If $X_{n} \leq 0$ the situation is slightly different because the server can take vacations and thus the workload of $C_{n+1}$ may be expressed in terms of these vacations. Indeed, when $X_n \leq 0$, this may mean two things: either the server begins one vacation or the server returns from its vacation. 
Consider at first, the case where $C_n$ is patient. Then, the customer $C_{n+1}$ arrives after the departure of $C_{n}$. At the departure time of $C_n$, the system is idle and the server takes one vacation $V_n$. If $C_{n+1}$ arrives during the vacation period $V_n$ then its workload is expressed as
\begin{equation*}
 W_{n+1} = W_n + \sigma_n + V_n - \tau_n.
\end{equation*}
Whereas, if after returning from its vacation, the server finds an idle system, the workload of $C_{n+1}$ is $W_{n+1} = 0$. Combining both results, we have 
\begin{equation*}
W_{n+1} = \left[ W_n + \sigma_n + V_n - \tau_n \right]^+.
\end{equation*}

When $X_n \leq 0$ and $W_n > D_n$, the workload of $C_{n+1}$ may be expressed in terms of vacation or not. Indeed, $C_{n+1}$ may arrive during a vacation period or arrive after this period.
In order to distinguish both cases, we need to introduce some definitions. In the sequel, we will introduce the concept of \textit{$(m,n)$-family of customers} and we need also to know how many number of customers are served with a $(m,n)$-family (where $m$ is a random variable). This number will enable us to identify the cases where $W_{n+1}$ is expressed in terms of vacation or not. 
\vspace{0.3cm}

Let us now define the concept of a $(m,n)$-family. Such a family will be denoted by $\mathcal{F}^{n}_{m}$, $m > 0$, $n \geq m$ and it consists of all customers $C_{m}, \ldots , C_{n}$ satisfying $X_{m-1} < 0, X_{m} >0, X_{m+1} >0, \ldots , X_{n-1} > 0, X_{n} <0$, \textit{i.e.},
\begin{equation*}
  \mathcal{F}^{n}_{m} = \left\lbrace C_{m}, \ldots , C_{n} \right\rbrace .
\end{equation*}

\n
When $m = 0$ the $(0,n)$-family is the family of customers satisfying $X_{0} >0 ,\ldots X_{n} < 0$, i.e.
\begin{equation*}
\mathcal{F}_{0}^{n} = \{ C_{0}, \ldots , C_{n} \}.
\end{equation*}  
Such a family may be composed only by one customer. For example, $\mathcal{F}_{n}^{n} = \{C_n \}$ contains only  $C_n$. In such a case we have $X_{n-1} < 0$, and $X_n < 0$. Whenever $m=n=0$, the family $\mathcal{F}^{0}_{0} = \{  C_0 \}$ satisfies only one condition $X_{0} < 0$. In conclusion, when $0 \leq m < n$, a family of customers must begin by customer $C_{m}$, such that $X_{m} > 0$, and must end by a customer $C_{n}$ satisfying $X_n < 0$. 

We now focus on the number of served customers within a $(m,n)$-family, $\mathcal{F}_{m}^{n}$. This number will be of major importance to define the workload $W_{n+1}$ when $X_n \leq 0$ and $W_n > D_n$. 
Assume that $X_{n} \leq 0$ and $C_n$ is impatient, \textit{i.e.}, $W_n > D_n$, then the $(m,n)$-family is well defined ($m$ is a realization of some random variable $M_n$. We shall explicitly define it right after). The number of served customers within $\mathcal{F}_{m}^{n}$ will be denoted by $N_n$ and is defined by
\begin{equation*}
N_n := \sum_{k=m}^{n} \mathbf{1}_{W_k \leq D_k} \in \{0, \ldots , n\}.
\end{equation*}

$\bullet$ If $N_n > 0$, then there at least one patient customer within $\mathcal{F}_{m}^{n}$. Consequently, at the instant where the event $\{ X_n \leq 0 , W_n > D_n\}$ occurs, the system is idle and the server takes one vacation at the departure time of the last served customer. The workload of $C_{n+1}$ is thus given by
\begin{equation*}
W_{n+1} = \left[ W_n + V_n - \tau_n \right]^+.
\end{equation*}

$\bullet$ Whereas, on the event $\{ X_n \leq 0 , W_n > D_n, N_n = 0 \}$ the system is idle but all customers within $\mathcal{F}_{m}^{n}$ are impatient. In this case, we have 
\begin{equation*}
W_{n+1} = \left[ W_n - \tau_n \right]^+ = 0.
\end{equation*}

Back to the definition of the random variable $M_n$. This random variable characterizes the the first customer of a family and is defined by
\begin{equation}
\label{chap2-proba:definition M_n}
M_n :=  \sup\left\lbrace 1 \leq r \leq n\,:\, X_{r-1} \leq 0 \mbox{ and } X_n \geq 0 \right\rbrace  \mbox{ on } \{ X_n \leq 0 \},
\end{equation}

with the convention $\sup \emptyset = - \infty$.
\vspace{0.3cm}

 The above discussion is summarized in the following Proposition.

\begin{proposition}
\label{chap2-proba:proposition:defnition SRS}
For a $G/G/1/V_s+G$ queue, the workload sequence $\{ W_n, n \in \mathbb{N} \}$ satisfies the Lindley type recursion, for all $n \geq 0$ 
\begin{equation}
\label{Lindley rec}
W_{0} = W\;\; \mathbb{P}-a.s. \qquad \textrm{and} \qquad     W_{n+1} = \left\lbrace \begin{array}{llll}
                   X_{n} \hspace{3.5cm} \mbox{if} \;\; X_{n} > 0, \vspace{0.3cm}\\
                   \left[ X_{n} + V_{n}\mathbf{1}_{N_{n}>0} \right]^{+} \hspace{1cm} \mbox{if} \;\;\ X_{n} \leq 0 ,  
                        \end{array}
          \right.      
\end{equation}
where the initial condition $W_{0} = W$ $\mathbb{P}$-a.s. is a non-negative random variable, and 
\begin{equation}
\label{chap2-proba:definition N_n}
N_n := \sum_{k=M_n}^{n} \mathbf{1}_{W_k \leq D_k},
\end{equation}
with $M_n$ defined by \eqref{chap2-proba:definition M_n}.
\end{proposition}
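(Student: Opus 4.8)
The plan is to treat the statement as a bookkeeping summary of the discussion preceding it: I would reorganize that discussion into an exhaustive partition of the sample space according to the sign of $X_n$ (defined in \eqref{chap2-proba:def variable Xn}) and the status of $C_n$, and then verify that the expression obtained for $W_{n+1}$ in each case reduces to the corresponding branch of \eqref{Lindley rec}. Throughout I use that, by definition, $W_{n+1}=W_{T_{n+1}^-}$ is the workload content just before $C_{n+1}$ arrives while $W_n=W_{T_n^-}$ is the content just before $C_n$ arrives; that between $T_n$ and $T_{n+1}$ this content is augmented by $\sigma_n$ exactly on $\{W_n<D_n\}$, is depleted at unit rate, and, should the system become empty in $(T_n,T_{n+1})$, is topped up by the single vacation $V_n$ the server then initiates (with the usual convention that, while the server is on vacation, the content carried by an arrival includes the residual of that vacation).

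On $\{X_n>0\}$ the inequality says exactly that $C_{n+1}$ arrives before the content present at $T_n$ --- augmented by $\sigma_n$ when $C_n$ is admitted --- has been exhausted, so no idle period, and hence no vacation, occurs in $(T_n,T_{n+1})$; whether or not $C_n$ balks one gets $W_{n+1}=W_n+\sigma_n\mathbf{1}_{W_n<D_n}-\tau_n=X_n$, which is the first branch. On $\{X_n\le 0\}$ the system does empty in $(T_n,T_{n+1})$, and here I would first check that $M_n$ in \eqref{chap2-proba:definition M_n}, hence the idle-to-idle family $\mathcal{F}_{M_n}^{n}$ and the count $N_n$ of admitted customers in it from \eqref{chap2-proba:definition N_n}, are well defined on this event --- the supremum runs over the finite set $\{1,\dots,n\}$, and the case where the current cycle begins with $C_0$ is covered because $C_0$ is admitted by hypothesis. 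Then I would split on $N_n$. If $N_n>0$, at least one service occurs in the cycle, the system empties at the departure of the last admitted customer, a fresh vacation $V_n$ starts there, and $C_{n+1}$ either arrives during $V_n$ or after it with the server still idle; in both cases $W_{n+1}=[W_n+\sigma_n\mathbf{1}_{W_n<D_n}+V_n-\tau_n]^+=[X_n+V_n]^+$. If $N_n=0$, every customer of $\mathcal{F}_{M_n}^{n}$ balks --- in particular $C_n$, so that $X_n=W_n-\tau_n$ --- no new vacation is triggered (any vacation still in force is a leftover already carried inside $W_n$), and the content merely runs down: $W_{n+1}=[W_n-\tau_n]^+=[X_n]^+=0$. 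Finally, the sub-case $\{X_n\le 0,\ W_n<D_n\}$ merges into the $N_n>0$ case, since $W_n<D_n$ gives $\mathbf{1}_{W_n\le D_n}=1$ and $C_n\in\mathcal{F}_{M_n}^{n}$, whence $N_n\ge 1$. Gathering the three cases on $\{X_n\le 0\}$ under the single indicator $\mathbf{1}_{N_n>0}$ yields $W_{n+1}=[X_n+V_n\mathbf{1}_{N_n>0}]^+$, which is the second branch.

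The purely computational steps --- inserting \eqref{chap2-proba:def variable Xn} into each case and using $[x]^+=0$ for $x\le 0$ --- I would leave to the reader. The substantive content is in the modelling, not the algebra: I must argue that, under the single-vacation discipline, exactly one vacation --- and no part of a later one --- is relevant to $W_{n+1}$ once the system empties in $(T_n,T_{n+1})$, so that the residual perceived by $C_{n+1}$ is precisely $[X_n+V_n]^+$; and I must make explicit that $\{N_n>0\}$ is exactly the event that some service takes place in the idle-to-idle cycle ending at $C_n$, which is what makes ``the departure of the last admitted customer'', and thus the onset of the vacation $V_n$, meaningful. I expect the fussiest --- though not deep --- part to be the verification that $\mathcal{F}_{M_n}^{n}$ and $N_n$ are well defined on $\{X_n\le 0\}$, in particular the handling of the cycle that starts with the reference customer $C_0$.
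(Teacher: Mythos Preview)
Your proposal is correct and mirrors the paper's own treatment: the paper does not give a separate proof of this proposition but presents it explicitly as a summary of the case analysis preceding it (partition by the sign of $X_n$, then by whether $C_n$ is admitted, and on $\{X_n\le 0,\ W_n>D_n\}$ by whether $N_n>0$). Your plan to repackage that discussion into an exhaustive case decomposition, together with your observation that $\{X_n\le 0,\ W_n<D_n\}$ forces $N_n\ge 1$ so the three sub-cases on $\{X_n\le 0\}$ collapse under the single indicator $\mathbf{1}_{N_n>0}$, is exactly the paper's argument.
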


\begin{remark}
The Model~\ref{Lindley rec} contains one event of probability one which is $\{ X_n \leq 0,  W_{n} < D_n, N_{n} > 0\} $ and one of probability zero, $\{ W_n \leq D_n, N_n = 0 \}$.
\end{remark}

\n The model~\eqref{Lindley rec} may be written more explicitly as 
\begin{equation}
\label{Lindley rec2}
W_{n+1} = \left\lbrace \begin{array}{llllll}
                   W_{n} + \sigma_{n} - \tau_{n} \hspace{3cm} \mbox{ if} \;\;  W_{n} + \sigma_{n}- \tau_{n} > 0, \, W_n \leq D_n, \vspace{0.3cm}\\
                    W_{n} - \tau_{n} \hspace{4cm} \mbox{if} \;\;  W_{n} - \tau_{n} > 0, \, W_n > D_n, \vspace{0.3cm}\\
                    \left[  W_{n} + \sigma_{n}+ V_{n} - \tau_{n} \right]^{+} \hspace{1.7cm} \mbox{if} \;\;  W_{n} + \sigma_{n} - \tau_{n} \leq 0 ,\, W_n \leq D_n,\vspace{0.3cm}\\
                   \left[  W_{n} + V_{n}- \tau_{n} \right]^{+} \hspace{2.6cm} \mbox{if} \;\;  W_{n} - \tau_{n} \leq 0 ,\, W_n > D_n, N_n  > 0,  \vspace{0.3cm}\\
                    \left[  W_{n} - \tau_{n} \right]^{+} = 0 \hspace{2.8cm}\mbox{if} \;\;  W_{n} - \tau_{n} \leq 0 ,\, W_n > D_n, N_n  = 0.
                        \end{array}
          \right.      
\end{equation}

In the standard queueing model with impatience, the workload process $\{ W_n, n \in \mathbb{N} \}$ is driven by a Stochastic Recursive Sequence (SRS) of the form
\begin{equation*}
 W_{0} = W \qquad \mbox{ and } \quad W_{n+1} = h(W_n,\xi_n),
\end{equation*}
where
\begin{equation}
\label{chap2-proba:def xi}
\xi_n := (\tau_n ,\sigma_n, D_n) \in \mathbb{R}_{+}^3.
\end{equation}

\n The model may be also described by a recursive equation but it has the particularity to depend of all the past through the random variables $N_n$ and $M_n$. We then define
\begin{eqnarray*}
W_{n,0}   &:=& \left( W_n, W_{n-1}, \ldots ,  W_0 \right)\\
\xi_{n,0} &:=& \left( \xi_n, \xi_{n-1},\ldots , \xi_0 \right),
\end{eqnarray*}
where $\xi_n := \left( \tau_n, \sigma_n, V_n, D_n \right) \in \mathbb{R}_{+}^4$. Hence, the workload sequence~\eqref{Lindley rec2} may be represented by the following SRS
\begin{equation}
\label{Lindley rec3}
W_0 = W ,\qquad W_{n+1} :=  \left\lbrace \begin{array}{lll}
       				h_1\left( W_{n}, \xi_{n} \right) \qquad \mbox{ on } \{ X_n >0\} \cup \{ X_n\leq 0, W_n \leq D_n \}, \vspace{0.3cm}\\
       				h_2\left( W_{n,0}, \xi_{n,0} \right) \quad \mbox{ on } \{ X_n \leq 0, W_n > D_n \},
                 \end{array}\right.
 \end{equation}
where $h_1$ and $h_2$ are some specified measurable functions and $\left( \xi_n , n \geq 0 \right)$ is a stationary and ergodic sequence.

\section{Stability}
\label{section 2}

This section is devoted to the study of the stability of the system, \textit{i.e.}, the existence of an equilibrium state. The evolution of this process is described by the recursive sequence~\eqref{Lindley rec3}, where $h$ is not monotonic in the state variable and depends on all the past of the $W_n$ and $\xi_n$. Thus, the technique developed by Loynes \cite{Loynes1962} for finding the stationary version of $W_n$ is not efficient. An alternative to prove the existence and the uniqueness of a stationary regime is the renovating events method introduced by Borovkov in \cite{Borovkov78}. Such a method was applied by Moyal \cite{Moyal2010} for a standard queueing model with impatience. The author provides a sufficient condition for the existence and the uniqueness of the stationary workload. Here again, this technique is not directly applicable to our model~\eqref{Lindley rec3} since it depends on all the past. To overcome this difficulty, we will introduce a recursive sequences $\left( Z_{n}^{W}, n \in \mathbb{N} \right)$ which corresponds to the workload sequence where we have removed all the events depending on the past. In this way, the sequence $\left( Z_n^{W}, n \geq 0 \right)$ will be of the form $Z_{n+1} = \varphi(Z_n, \xi_n)$, where the measurable function $\varphi$ is not monotonic in the state variable but will dependent on the past only at time $T_n$, and where $\xi$ is defined by \eqref{chap2-proba:def xi}. This strategy will allow us to apply the Borovokov theory to $\left( Z_n^{W}, n \geq 0 \right)$ and to provide a partial stability result for the sequence $\left( W_n^{W}, n \ge 0 \right)$, in the following sense.

\begin{definition}
The sequence $\left( W_n , n \geq 0\right)$ is partially stable if there exits a function $f$ defined on $\mathbb{N}$ with values in $\mathbb{N}$ strictly increasing such that
\begin{equation}
W_{f(n)} = Z_n,
\end{equation}
where $Z_n$ is stable sequence.
\end{definition}

We now define the new sequence $\left( Z_n^{W}, n \geq 0 \right)$ by
\begin{align}
\label{Lindley rec Z}
Z_0 = W, \qquad  Z_{n+1} &= \left\lbrace \begin{array}{lll}
                   Z_{n} + \sigma_{n} - \tau_{n} \hspace{3cm} \mbox{if} \;\;  Z_{n} + \sigma_{n}- \tau_{n} > 0, \, Z_n \leq D_n \vspace{0.3cm}\\
                    Z_{n} - \tau_{n} \hspace{4cm} \mbox{if} \;\;  Z_{n} - \tau_{n} > 0, \, Z_n > D_n \vspace{0.3cm} \nonumber\\
                    \left[  Z_{n} + \sigma_{n}+ V_{n} - \tau_{n} \right]^{+} \hspace{1.7cm} \mbox{if} \;\;  Z_{n} + \sigma_{n} - \tau_{n} \leq 0 ,\, Z_n \leq D_n.
                        \end{array} \right.   \\
                        &   :=\varphi(Z_n, \xi_n),
\end{align}
where $\xi_n$ is defined by \eqref{chap2-proba:def xi}. This workload sequence is stochastically recursive, driven by the measurable map $\varphi$ defined, for all $x \in \mathbb{R}_{+}$, by
\begin{equation*}
 \varphi(x) = \left\lbrace \begin{array}{lll}
             x + \sigma \mathbf{1}_{x \leq D} - \tau \hspace{1cm} \mbox{ if } x + \sigma \mathbf{1}_{x \leq D} - \tau >0\vspace{0.2cm}\\
         \left[ x + \sigma + V - \tau \right]^{+} \hspace{0.8cm} \mbox{if} \;\;  x + \sigma - \tau \leq 0 ,\, x \leq D. \end{array} \right.
\end{equation*}
The map $\varphi$ is not monotonic in the state variable, hence Loynes's method cannot be applied. We use then the renovating events to provide a sufficient condition for the existence and uniqueness of a solution to the equation
\begin{equation}
\label{chap2-proba:stationary equation Z}
Z \circ \theta = \varphi(Z, \xi) \quad \mathbb{P}-a.s..
\end{equation}

\n To this end, let us introduce the second sequence $\left( Y_{n}^Y, n \in \mathbb{N} \right)$ defined by
\begin{equation}
\label{chap2-proba:SRS Yn}
Y_{0} = Y \qquad \mbox{ and } \qquad Y_{n+1} = \left[ \max( Y_{n} + V_n,\sigma_n + D_n + V_{n}) - \tau_n  \right]^{+}.
\end{equation}

\n This recursive equation may be written as follows
\begin{equation*}
Y_{n+1} = \psi(Y_{n},\xi_n),
\end{equation*}
where $\xi$ is defined by \eqref{chap2-proba:def xi} and
\begin{equation*}
\psi(x) = \left[ \max(x + V, \sigma + V+D)-\tau \right]^{+}.
\end{equation*}

\n Hence, the SRS $\left( Y_n^{Y}, n \geq 0 \right)$ is stationary if and only if workload $Y$ solves the equation
\begin{equation}
\label{chap2-proba:stationary equation Y}
Y \circ \theta =  \psi(Y, \xi),
\end{equation}
where $\xi$ is defined by \eqref{chap2-proba:def xi}. The sequence $\left( Y_{n}^Y, n \in \mathbb{N} \right)$ has the advantage to be non-decreasing in the state variable, so that Loynes's construction may be applied. In the sequel, we will construct a stationary workload for $\left( Y_{n}^Y, n \in \mathbb{N} \right)$ which enable us to define a sequence of renovating events for $\left( Z_{n}^W, n \in \mathbb{N} \right)$ and to provide a sufficient condition for the existence and uniqueness of a solution to \eqref{chap2-proba:stationary equation Z}.

\begin{lemma}
\label{chap2-proba:lemmma stability of Yn}
If $\mathbb{E}\left( V-\tau \right) < \infty$, then there exists a unique $\mathbb{P}$-a.s. finite solution of \eqref{chap2-proba:stationary equation Y} given by
\begin{equation}
\label{chap2-proba:stationary solution for Yn}
 M_{\infty} := \left[ \sup_{j > 0}\left( \sigma_{-j} + D_{-j} + \sum_{i=1}^{j} V_{-i} - \sum_{i=1}^{j} \tau_{-i} \right) \right]^{+}.
\end{equation}

\end{lemma}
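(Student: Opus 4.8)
The map $\psi(x)=\big[\max(x+V,\sigma+D+V)-\tau\big]^{+}$ is non-decreasing in the state variable, so the plan is the classical Loynes backward scheme, carried out explicitly enough that it both pins the candidate limit to the formula \eqref{chap2-proba:stationary solution for Yn} and yields uniqueness. First I would put $\psi$ in the normal form $\psi(x)=\big[\max(x,\sigma+D)+(V-\tau)\big]^{+}=\max\big(0,\max(x,\sigma+D)+(V-\tau)\big)$, and abbreviate $a_k:=V_k-\tau_k$, $c_k:=\sigma_k+D_k\ge 0$, $A_j:=\sum_{i=1}^{j}a_{-i}$. Writing $F_{-n,0}(x)$ for the value at time $0$ of the recursion run from state $x$ at time $-n$, an induction on $n$ — the key point being that the nonnegative ``refresh'' levels $c_{-j}$ absorb the spurious bare terms $a_{-i}+0$ that would otherwise survive the reflection — yields the closed form
\begin{equation*}
F_{-n,0}(x)=\Big[\max\Big(x+A_n,\ \max_{1\le j\le n}\big(c_{-j}+A_j\big)\Big)\Big]^{+}.
\end{equation*}
Taking $x=0$ gives the Loynes sequence $M_n:=F_{-n,0}(0)=\big[\max_{1\le j\le n}(c_{-j}+A_j)\big]^{+}$, which is visibly non-decreasing in $n$, so $M_n\uparrow M_\infty:=\big[\sup_{j>0}(c_{-j}+A_j)\big]^{+}$; since $c_{-j}+A_j=\sigma_{-j}+D_{-j}+\sum_{i=1}^{j}V_{-i}-\sum_{i=1}^{j}\tau_{-i}$, this $M_\infty$ is exactly \eqref{chap2-proba:stationary solution for Yn}.

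Next I would prove $M_\infty<\infty$ a.s. By Birkhoff's ergodic theorem applied to the stationary ergodic integrable sequence $(V_{-i}-\tau_{-i})_{i\ge 1}$, $A_n/n\to\mathbb{E}(V-\tau)<0$, hence $A_n\to-\infty$ a.s.; and integrability of $\sigma$ and $D$ gives $c_{-j}/j\to 0$ a.s. by Borel--Cantelli applied to $\mathbb{P}(\sigma_{-j}+D_{-j}>\varepsilon j)=\mathbb{P}(\sigma_0+D_0>\varepsilon j)$. Therefore $c_{-j}+A_j\to-\infty$, the supremum in \eqref{chap2-proba:stationary solution for Yn} is attained and finite, and $M_\infty<\infty$ a.s. Then a routine check on the backward construction gives $M_{n+1}\circ\theta=\psi(M_n,\xi)$ for every $n$; since $\psi(\cdot,\xi)$ is continuous and non-decreasing and $M_n\uparrow M_\infty<\infty$, letting $n\to\infty$ yields $M_\infty\circ\theta=\psi(M_\infty,\xi)$ a.s., so $M_\infty$ solves \eqref{chap2-proba:stationary equation Y}.

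For uniqueness I would argue as follows. Let $\hat Y$ be any a.s. finite solution of \eqref{chap2-proba:stationary equation Y}. Iterating the stationary identity $n$ times gives $\hat Y=F_{-n,0}(\hat Y\circ\theta^{-n})$; as $\hat Y\circ\theta^{-n}\ge 0$ and $F_{-n,0}$ is non-decreasing, $\hat Y\ge F_{-n,0}(0)=M_n$, whence $\hat Y\ge M_\infty$ a.s. Conversely, suppose $\mathbb{P}(\hat Y>M_\infty)>0$. On that event $\hat Y>M_\infty\ge s_n:=\max_{1\le j\le n}(c_{-j}+A_j)$ and $\hat Y>0$, so the identity $\hat Y=\big[\max(\hat Y\circ\theta^{-n}+A_n,\ s_n)\big]^{+}$ forces $\hat Y\circ\theta^{-n}+A_n=\hat Y$ for every $n$, and since $A_n\to-\infty$ this means $\hat Y\circ\theta^{-n}\to\infty$ on $\{\hat Y>M_\infty\}$. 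But $\theta$-invariance of $\mathbb{P}$ gives $\mathbb{P}\big(\bigcap_{n\ge N}\{\hat Y\circ\theta^{-n}>K\}\big)\le\mathbb{P}(\hat Y>K)$ for all $N,K$, so letting $N\to\infty$ and then $K\to\infty$ and using $\hat Y<\infty$ a.s. forces $\mathbb{P}(\hat Y\circ\theta^{-n}\to\infty)=0$ — a contradiction. Hence $\hat Y=M_\infty$ a.s.

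I expect the two genuinely delicate points to be: (i) establishing the closed form of $F_{-n,0}$, where the induction must verify that the reflection at $0$ combined with the levels $c_{-j}\ge 0$ leaves no extra terms, after which everything is bookkeeping; and (ii) the second half of the uniqueness step, since Loynes monotonicity alone gives only minimality of $M_\infty$, so one really needs the measure-preservation of $\theta$ together with $\hat Y<\infty$ a.s. to exclude strictly larger stationary solutions. The finiteness step is routine but is where the stability (negative drift) hypothesis is indispensable.
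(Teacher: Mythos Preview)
Your proof is correct and follows the same overarching Loynes backward scheme as the paper, but the three steps are executed with noticeably different tools.

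For \emph{existence}, the paper works directly with the Loynes sequence $M_n=Y_n^0\circ\theta^{-n}$ and proves the recursion $M_{n+1}\circ\theta=\psi(M_n,\xi)$ by induction on the explicit formula for $M_n$; you instead derive the full closed form $F_{-n,0}(x)$ and then specialize to $x=0$. Your route is slightly more informative (it gives the dependence on the initial state), and your observation that $c_{-j}\ge 0$ absorbs the bare reflected terms is exactly the content of the paper's inductive computation. For \emph{finiteness}, the paper rewrites $\sigma_{-j}+D_{-j}$ as a telescoping sum so that Birkhoff can be applied to a single ergodic average; you keep $c_{-j}$ separate and kill it with the first Borel--Cantelli lemma via $\sum_j\mathbb{P}(c_0>\varepsilon j)<\infty$. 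Your argument is shorter and just as valid in the merely stationary setting, since only the first Borel--Cantelli lemma is used. For \emph{uniqueness}, the paper exploits ergodicity: it shows $\{U\le M_\infty\}$ is $\theta$-invariant and has positive probability (via $\mathbb{P}(U\le\sigma+D+V)>0$), hence probability one. Your contradiction argument bypasses ergodicity entirely, using only measure-preservation of $\theta$ together with $\hat Y<\infty$ a.s.; this is a genuine simplification and would work even if $(\mathbb{P},\theta)$ were not ergodic. Both approaches first establish minimality of $M_\infty$ in the same way.

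In short: same architecture, lighter machinery on your side for the last two steps.
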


\n The following lemma shows that the workload $Z^{W}_n$ is majorized by $Y^{Y}_n$ provided that the initial conditions $W$ and $Y$ satisfy $W \leq Y$.

\begin{lemma}
\label{chap2-proba:lemma:comparaison Z_n < Y_n}
If $W \leq Y$, the workload processes $ Z_n^{W}$ and $ Y_n^{Y}$ satisfy the following relation, for all $n \in \mathbb{N}$,
\begin{equation}
\label{Zn < Yn}
    Z_n^{W} \leq Y_n^{Y}, \quad a.s..
\end{equation}
\end{lemma}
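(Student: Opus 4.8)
The plan is to proceed by induction on $n \in \mathbb{N}$. The base case $n = 0$ is immediate: $Z_0^W = W \leq Y = Y_0^Y$ by hypothesis. For the induction step, I will assume $Z_n^W \leq Y_n^Y$ and write $Z := Z_n^W$, $Y := Y_n^Y$ for readability; the goal is then $Z_{n+1}^W \leq Y_{n+1}^Y$, where by~\eqref{chap2-proba:SRS Yn} one has $Y_{n+1}^Y = \left[ \max(Y + V_n,\, \sigma_n + D_n + V_n) - \tau_n \right]^+$.

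The step will be carried out by examining separately each of the three branches of~\eqref{Lindley rec Z} that may define $Z_{n+1}^W$. Two elementary facts will be used throughout: $V_n \geq 0$ (vacation times are non-negative, since $\Omega = (\mathbb{R}_{+}^{4})^{\mathbb{Z}}$), and $x \mapsto [x]^+$ is non-decreasing. On the first branch $Z \leq D_n$, so $Z_{n+1}^W = Z + \sigma_n - \tau_n \leq \sigma_n + D_n + V_n - \tau_n \leq \max(Y + V_n,\, \sigma_n + D_n + V_n) - \tau_n$; since $Z_{n+1}^W > 0$ here, it equals its own positive part and hence is $\leq Y_{n+1}^Y$. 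On the third branch again $Z \leq D_n$, so $Z + \sigma_n + V_n - \tau_n \leq \sigma_n + D_n + V_n - \tau_n \leq \max(Y + V_n,\, \sigma_n + D_n + V_n) - \tau_n$, and applying $[\cdot]^+$ to both sides yields $Z_{n+1}^W \leq Y_{n+1}^Y$.

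The remaining (middle) branch is the only one where the induction hypothesis is actually invoked: there $Z_{n+1}^W = Z - \tau_n \leq Y - \tau_n \leq Y + V_n - \tau_n \leq \max(Y + V_n,\, \sigma_n + D_n + V_n) - \tau_n$, and since $Z_{n+1}^W > 0$ on this branch it coincides with its positive part, giving $Z_{n+1}^W \leq Y_{n+1}^Y$. This covers all cases and closes the induction, all identities holding $\mathbb{P}$-a.s.

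I do not expect a genuine obstacle; the one point needing care is the bookkeeping with positive parts — in the two ``overflow'' branches of $Z$ the quantity is already positive so $[\cdot]^+$ may be inserted for free, whereas in the clipped branch one must use monotonicity of $[\cdot]^+$ rather than discarding it. Conceptually, the term $\sigma_n + D_n$ inside $\psi$ in~\eqref{chap2-proba:SRS Yn} is precisely what absorbs the two branches on which $C_n$ is served ($Z_n \leq D_n$), while the term $Y + V_n$ absorbs the branch on which $C_n$ balks.
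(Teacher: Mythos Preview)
Your proof is correct and follows essentially the same approach as the paper: both proceed by induction and handle the three branches of the $Z$-recursion by bounding the two ``served'' branches ($Z_n \leq D_n$) via $Z_n + \sigma_n \leq D_n + \sigma_n \leq D_n + \sigma_n + V_n$ and the ``balking'' branch ($Z_n > D_n$) via the induction hypothesis $Z_n \leq Y_n$. Your treatment is in fact more carefully written than the paper's, which only spells out the step $n=0 \to n=1$ and leaves the induction implicit; your explicit bookkeeping with positive parts is also cleaner.
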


\n We have the following result.
\begin{theorem} 
\label{chap2-proba:theorem:stability of Z}
In the $G/G/1 + G$ queue with simple vacation times, if $\mathbb{E}(V-\tau) < 0$ and
\begin{equation}
\label{stability condition}
  \mathbb{P} \left[ \sup_{j > 0}\left( \sigma_{-j} + D_{-j} + \sum_{i=1}^{j} V_{-j} - \sum_{i=1}^j \tau_{-i} \right) \leq 0 \right] > 0,
\end{equation}
then, there exists a unique stationary sequence $\{ U \circ \theta^{n} \}$, solution of \eqref{Lindley rec Z}, and such that for all $W$, $\{ Z_n^W, n \in \mathbb{N} \}$ converges with strong coupling to $\{ U \circ \theta^n \}$.
\end{theorem}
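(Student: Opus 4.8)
The plan is to bring the non‑monotone, history‑free recursion \eqref{Lindley rec Z} within reach of Borovkov's renovating‑events method (\cite{Borovkov78}, \cite{Borovkovbook}), following the strategy of \cite{Moyal2010}, using the monotone majorant $(Y_n)$ and the comparison \eqref{Zn < Yn} to control $(Z_n)$. I would first record two facts. Since $\mathbb{E}(V-\tau)<0$, Lemma~\ref{chap2-proba:lemmma stability of Yn} furnishes the unique a.s.\ finite stationary solution of \eqref{chap2-proba:stationary equation Y}, namely $M_\infty\circ\theta^{n}$ with $M_\infty$ as in \eqref{chap2-proba:stationary solution for Yn}. Moreover, applying Lemma~\ref{chap2-proba:lemma:comparaison Z_n < Y_n} from an arbitrary time $n$ onward shows that if a solution $(Z_m)$ of \eqref{Lindley rec Z} satisfies $Z_n\le M_\infty\circ\theta^{n}$ for some $n$, then $Z_m\le M_\infty\circ\theta^{m}$ for all $m\ge n$; in particular $0\le U\le M_\infty$ for every stationary solution $U$ of \eqref{chap2-proba:stationary equation Z}.

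\smallskip
\noindent \emph{Renovating events.} Put $A_0:=\{M_\infty=0\}$ and $A_n:=\theta^{-n}A_0=\{M_\infty\circ\theta^{n}=0\}$. Since $M_\infty=[\sup_{j>0}(\sigma_{-j}+D_{-j}+\sum_{i=1}^{j}V_{-i}-\sum_{i=1}^{j}\tau_{-i})]^{+}$, hypothesis \eqref{stability condition} says exactly that $\mathbb{P}(A_0)>0$, and the $\theta$‑ergodicity of $\mathbb{P}$ then makes the events $A_n$ occur for infinitely many indices, both as $n\to+\infty$ and as $n\to-\infty$, $\mathbb{P}$‑a.s. On $A_n$ the recursion \eqref{Lindley rec Z} collapses: as soon as $0\le Z_n\le M_\infty\circ\theta^{n}=0$ one has $Z_n=0$, whence $Z_{n+1}=\varphi(0,\xi_n)$ is a fixed measurable function of $\xi\circ\theta^{n}$ that depends neither on $W$ nor on the past, and likewise $U\circ\theta^{n}=0$, so $U\circ\theta^{n+1}=\varphi(0,\xi_n)$ as well. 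Hence, if at a renovating instant $n$ the trajectory $(Z_k^W)$ has already entered the ``tube'' $[0,M_\infty\circ\theta^{k}]$, then $Z_n^W=0=U\circ\theta^{n}$, and since both sequences obey \eqref{Lindley rec Z} from the same state afterwards they coincide for all $m\ge n$: this is the required strong coupling, with coupling time a renovating instant, and it also gives uniqueness.

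\smallskip
\noindent \emph{Entry into the tube and existence.} It remains to show that every trajectory $(Z_k^W)$ enters the tube. If $Z_k^W>D_k$ held for all $k\ge 0$, then at each step the customer balks and no truncation ever occurs, so $Z_n^W=W-\sum_{i=0}^{n-1}\tau_i\to-\infty$ ($\tau>0$, $\mathbb{E}\tau>0$, by the ergodic theorem) — impossible, since $Z_n^W\ge 0$. Thus $Z_n^W\le D_n$ at some a.s.\ finite $n$, and at such an $n$ the ``served'' branches of \eqref{Lindley rec Z} give $Z_{n+1}^W\le[D_n+\sigma_n+V_n-\tau_n]^{+}\le M_\infty\circ\theta^{n+1}$ (the last inequality from \eqref{chap2-proba:SRS Yn}); by the comparison recalled above, $Z_m^W\le M_\infty\circ\theta^{m}$ for all $m\ge n+1$, and the next renovating instant then produces the coupling. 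For existence of $U$, I would run Loynes' scheme on \eqref{Lindley rec Z}: let $Z_0^{[-k]}$ be the value at $0$ of the solution started from $0$ at time $-k$; by \eqref{Zn < Yn} it lies in $[0,M_\infty]$, and applying the coupling argument to the pair $(Z^{[-k]},Z^{[-k']})$ shows $Z_0^{[-k]}$ is eventually constant in $k$; its limit defines $U$, which satisfies $U\circ\theta=\varphi(U,\xi)$ with $0\le U\le M_\infty$.

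\smallskip
\noindent The step I expect to be the main obstacle is the ``entry into the tube'': despite $\varphi$ being non‑monotone, one must argue that the indicator $\mathbf{1}_{Z_n\le D_n}$ in \eqref{Lindley rec Z} only ever \emph{reduces} the injected work, so that once $Z_n^W$ falls below the current deadline the comparison with the stationary majorant takes over. It is precisely because this reasoning is unavailable for the full recursion \eqref{Lindley rec3}, which genuinely depends on the past through $N_n$ and $M_n$, that the statement is phrased for $(Z_n^W)$, yielding only partial stability of $(W_n^W)$.
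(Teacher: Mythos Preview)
Your proof is correct and follows essentially the same route as the paper: the same renovating events $A_n=\{M_\infty\circ\theta^{n}=0\}$, the same comparison with the monotone majorant $(Y_n)$, and the same appeal to Borovkov's framework. Your explicit ``entry into the tube'' argument for arbitrary $W$ is a welcome addition that the paper glosses over (it simply assumes $W\le Y$ at the outset), while the paper in turn gives a more careful uniqueness argument---showing $\mathbb{P}(X<D)>0$ for any stationary solution $X$, whence $\{X\le Y\}$ is $\theta$-invariant of positive probability, hence of probability one---which is precisely what is needed to justify rigorously your opening claim that $U\le M_\infty$ for every stationary $U$.
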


We can see the sequence $\left( Z_n, n \geq 0 \right)$ as a subsequence of $\left( W_n, n \geq 0 \right)$. Hence, there exists a function $f$, strictly increasing such that
\begin{equation}
Z_n = W_{f(n)} \qquad \mbox{ and }\qquad  \varphi(Z_n, \xi_n) = h(W_{f(n)},\xi_{f(n)}). 
\end{equation}
Under the assumptions $\mathbb{E}(V-\tau) < 0$ and \eqref{stability condition} there is a strong coupling for the sequences $\left( W_{f(n)}, n \geq 0 \right)$ and $\left( U \circ \theta^n \right)$. Moreover, it is well known that a sequence that couples with a stationary sequence $\left( U \circ \theta \right)$ converges in distribution to $U$. This, could lead to some control on the distribution of $W_n$ with respect to that of $U$


\section{Independent and identically distributed case: \texorpdfstring{$GI/GI/V_S/1 + GI$}{Lg}}
\label{section 3}

In this section, we consider the case where all sequences $\{ \tau_n, n \in \mathbb{Z} \}$, $\{ \sigma_n, n \in \mathbb{Z} \}$, $\{ V_n, n \in \mathbb{Z} \}$ and $\{ D_n, n \in \mathbb{Z} \}$ are independent and identically distributed, and independent of each other. 
We assume that $W_n$ admits stationary limit regime, which can be argued using the i.i.d. framework. We will also focus on the stationary integral equation satisfied by the probability density function of the workload. 
\vspace{0.3cm}

Let $F_{n}(x)$, $x \in \mathbb{R}^{+}$ be the distribution function of $W_{n}$. We denote by $f_n$ the derivative of $F_n$ when it exists. Referring to model~\eqref{Lindley rec}, we have the following inequalities. For fixed $x > 0$,

\begin{align*}
\begin{split}
0 \leq W_n \leq x & \Leftrightarrow \left\lbrace W_n + \sigma_n - \tau_n \leq x, W_n \leq D_n, W_n + \sigma_n - \tau_n \geq 0  \right\rbrace \\
                  & \qquad \cup \left\lbrace W_n - \tau_n \leq x, W_n \geq D_n , W_n - \tau_n \geq 0 \right\rbrace \\
                  & \qquad \cup \{W_n +\sigma_n + V_n - \tau_n \leq x , W_n \leq D_n , W_n+ \sigma_n - \tau_n \leq 0\} \\
                  & \qquad \cup \{W_n + V_{n} - \tau_n \leq x, W_n \leq D_n, W_n - \tau_n \leq 0,N_{n+1} >0\}.
\end{split}
\end{align*}

\n \\
In the following we will derive an integral equation for $f_n$, the pdf of $W_n$. The classical method builds upon the above inequalities to derive the integral equation, which expresses $F_{n+1}(.)$ in terms of $F_{n}(0)$ and $f_{n}(.)$. The distribution function of model~\eqref{Lindley rec} is given, for all $x \in \mathbb{R}^{+}$, by

\begin{align*}
\begin{split}
F_{n+1}(x) &= \mathbb{P}\left( W_{n+1} \leq x \right) \\
           &= \mathbb{P}\left(  W_n + \sigma_n - \tau_n \leq x, W_n \leq D_n, W_n + \sigma_n - \tau_n \geq 0  \right)\\
           & \qquad + \mathbb{P}\left( W_n - \tau_n \leq x, W_n \geq D_n , W_n - \tau_n \geq 0\right)\\
           & \qquad + \mathbb{P} \left( W_n +\sigma_n + V_n - \tau_n \leq x , W_n \leq D_n , W_n+ \sigma_n - \tau_n \leq 0 \right)\\
           & \qquad + \mathbb{P}\left( W_n + V_{n} - \tau_n \leq x, W_n \leq D_n, W_n - \tau_n \leq 0,N_{n+1} >0\right) \\
           &:= I(x) + J(x) + K(x) +L(x). 
\end{split}
\end{align*}

\n
We now calculate the four probabilities. Conditioning on $\tau_n$ and then on $W_n$ and using independence, the first probability gives 
\begin{align*}
\begin{split}
I(x) :& = \mathbb{P}\left( W_n + \sigma_n - \tau_n \leq x ,W_n \leq D_n, W_n + \sigma_n - \tau_n \geq 0 \right) \\
      & = \int_{0}^{\infty}dA(t)\int_{0^-}^{x+t}dF_n(u)\mathbb{P}(\sigma_n \leq x+t-u, u \leq D_n, \sigma_n \geq t-u).
\end{split}
\end{align*}

\n
The independence between $\sigma_n$ and $D_n$ yields
\begin{align}
\label{I(x)}
\begin{split}
I(x) & = \int_{0}^{\infty}dA(t)\int_{0}^{x+t}dF_n(u) \mathbb{P}(u \geq D_n)\mathbb{P}(t-u \leq \sigma_n \leq x+t-u)\\
     & = \int_{0}^{\infty}dA(t)\int_{0}^{x+t} \bar{G}(u) dF_n(u)\int_{t-u}^{x+t-u} dB(s).
\end{split}
\end{align}

\n
Using the same arguments as above, for $x > 0$,
\begin{align}
\label{J(x)}
J(x) &:= \mathbb{P}(W_n - \tau_n \leq x, W_n \geq D_n , W_n - \tau_n \geq 0 ) \nonumber \\
     &= \int_{0}^{\infty} dA(t)\int_{t}^{x+t} dF_{n}(u) G(u),
\end{align}

\n and
\begin{align}
\label{K(x)}
K(x) &:= \mathbb{P}(W_n +\sigma_n + V_n - \tau_n \leq x , W_n \leq D_n , W_n+ \sigma_n - \tau_n \leq 0)\nonumber \\
     & = \int_{0}^{\infty}dA(t) \int_{0^-}^{t}dF_n(u) \bar{G}(u)\int_{0}^{t-u}dB(s)V(x+t-u-s).
\end{align}

\n 
The calculation of the last probability slightly differs
\begin{align*}
L(x) &:= \mathbb{P}(W_n + V_{\ell_{n+1}} - \tau_n \leq x, W_n \geq D_n, W_n - \tau_n \leq 0,N_{n+1} >0,0 \leq \ell_{n+1} < n)\\
     & = \int_{0}^{\infty} dA(t) \int_{0}^{t} dF_n(u)\mathbb{P}(V_{\ell_{n+1}} \leq x+t-u, u \geq D_n,N_{n+1} >0,0 \leq \ell_{n+1} < n).
\end{align*}


\n Moreover, we have
\begin{align}
\label{L(x)}
 L(x) &= \int_{0}^{\infty} dA(t) \int_{0}^{t} dF_n(u)\mathbb{P}(V_n \leq x+t-u, u \geq D_n) \nonumber \\
      &= \int_{0}^{\infty} dA(t) \int_{0}^{t} G(u) dF_n(u) V(x+t-u).
\end{align}

\n
Combining Equations~\eqref{I(x)},\eqref{J(x)},\eqref{K(x)} and \eqref{L(x)}, the integral equation for the distribution function $F_n(.)$ of $W_n$ is given by
\begin{equation}
\label{integral equation F(n+1)}
\begin{split}
F_{n+1}(x) &= \int_{0}^{\infty} dA(t)\int_{0}^{x+t} dW_{n}(u)\left[\bar{G}(u) \int_{t-u}^{x+t-u}dB(s) + G(u) + \bar{G}(u)\int_{0}^{t-u} V(x+t-u-s) dB(s) \right] \\
           & \qquad + \int_{0}^{\infty} dA(t)\int_{0}^{t} dW_{n}(u)\left[ -G(u)\bar{V}(x+t-u) \right].
\end{split}
\end{equation}

\n
Using the Helly-Bray Theorem (see for example \cite{LoeveI}) we obtain the asymptotic integral equation satisfied by the distribution function $F$
\begin{equation}
\label{integral equation F}
\begin{split}
F(x) &= \int_{0}^{\infty} dA(t)\int_{0}^{x+t} d(u)\left[\bar{G}(u) \int_{t-u}^{x+t-u}dB(s) + G(u) + \bar{G}(u)\int_{0}^{t-u} V(x+t-u-s) dB(s) \right] \\
           & \qquad + \int_{0}^{\infty} dA(t)\int_{0}^{t} dW(u)\left[ -G(u)\bar{V}(x+t-u) \right].
\end{split}
\end{equation}

\section{The case of arrivals according to a Poisson Point Process}
\label{section 4}

\subsection{Integral equation of \texorpdfstring{$M/GI/V_S/1 + GI$}{Lg}}

\begin{proposition}
\label{stationary integral equation}
The stationary pdf $f$ of the waiting time in the $M/GI/V_s + G$ system satisfies the integral equation

\begin{equation}
\label{stationary integral equaiton}
f(x) = \lambda P(0) \bar{B}(x) + \lambda_{2} \bar{V}(x) + \lambda \int_{0}^{x} \bar{G}(u) \bar{B}(x-u) f(u) du,\quad x > 0,
\end{equation}

\n
with the normalizing equation

\begin{equation}
P(0) + \int_{0}^{\infty} f(x)dx = 1,
\end{equation}

\n
where $P(0)$ is the steady state-state probability of finding the system empty and the unknown constant $\lambda_{2}$ is given by

\begin{equation}
\label{constant Cv}
\lambda_{2} := C_{1} + C_{2} + C_{3},
\end{equation}

\n
with

\begin{eqnarray}
C_{1} &:=& P(0)\int_{z=x}^{\infty}f_{B}(z-x)\lambda e^{-\lambda(z-x)}dz, \label{constant C1}\\
C_{2} &:=& \int_{z=x}^{\infty}\int_{u=0}^{z-x} \bar{G}(u)f_{B}(z-x-u)f(u)\lambda e^{-\lambda(z-x)}du dz,\label{constant C2}\\
C_{3} &:=& \int_{z=x}^{\infty} f(z-x) \lambda e^{-\lambda(z-x)}dz.\label{constant C3}
\end{eqnarray}

\end{proposition}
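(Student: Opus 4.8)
The plan is to specialize the stationary integral equation \eqref{integral equation F} to Poisson arrivals and then differentiate it in $x$ to pass from the distribution function to its density. Write $A(t)=1-e^{-\lambda t}$, so $dA(t)=\lambda e^{-\lambda t}\,dt$, and decompose the law of the stationary workload as $dF(u)=P(0)\,\delta_0(du)+f(u)\,du$, where $P(0)=F(0)$ is the mass at the origin; from \eqref{integral equation F} together with the Poisson kernel one checks that $F$ is absolutely continuous on $(0,\infty)$, which legitimizes this decomposition and the later differentiation (we also use that the service law has a density $f_B$, implicit in the constants $C_1,C_2$). Since arrivals are Poisson, PASTA gives that the customer--stationary workload $F$ coincides with the time--stationary one, so the resulting $f$ is the stationary pdf of the (virtual) waiting time, and the normalization $P(0)+\int_0^\infty f(x)\,dx=1$ is merely $F(0^-)=0$, $F(\infty)=1$.

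First I would substitute $dA(t)=\lambda e^{-\lambda t}\,dt$ into \eqref{integral equation F} and make the change of variable $z=x+t$ in each $t$--integral; using $\lambda e^{-\lambda t}=\lambda e^{\lambda x}e^{-\lambda z}$ and multiplying through by $e^{-\lambda x}$, the equation takes the form
\[
e^{-\lambda x}F(x)=\lambda\int_x^\infty e^{-\lambda z}\,g(x,z)\,dz,\qquad g(x,z)=H(z)+R(x,z),
\]
where $H(z)=\int_{[0,z]}\big[\bar G(u)B(z-u)+G(u)\big]\,dF(u)$ is free of $x$, while
\[
R(x,z)=\int_{[0,z-x]}\Big[-\bar G(u)B(z-u-x)+\bar G(u)\!\int_0^{z-u-x}\!\!V(z-u-s)\,dB(s)-G(u)\bar V(z-u)\Big]dF(u)
\]
is a ``windowed'' remainder in which $x$ enters only through a window of width at most $x$ below $z$ and which vanishes at $z=x$. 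Differentiating in $x$ by the Leibniz rule (the boundary contribution at $z=x$ being $-\lambda e^{-\lambda x}g(x,x)=-\lambda e^{-\lambda x}H(x)$, since $R(x,x)=0$) yields $f(x)=\lambda[F(x)-H(x)]+\lambda e^{\lambda x}\int_x^\infty e^{-\lambda z}\,\partial_x R(x,z)\,dz$. The first term already produces the non--vacation part of the claim: since $F(x)=\int_{[0,x]}dF(u)$ and $1-G=\bar G$, a short rearrangement gives
\[
\lambda\big[F(x)-H(x)\big]=\lambda\!\int_{[0,x]}\!\bar G(u)\bar B(x-u)\,dF(u)=\lambda P(0)\bar B(x)+\lambda\!\int_0^x\!\bar G(u)\bar B(x-u)f(u)\,du,
\]
using $\bar G(0)=1$ and $G(0)=0$ (the latter assuming, as is natural, $D>0$ a.s.); this is exactly the $\lambda P(0)\bar B(x)$ and convolution terms of \eqref{stationary integral equaiton}, the $G(u)$--contributions (coming, in the derivation of \eqref{integral equation F}, from the $J$-- and $L$--terms) having cancelled.

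It remains to show that $\lambda e^{\lambda x}\int_x^\infty e^{-\lambda z}\,\partial_x R(x,z)\,dz=\lambda_2\bar V(x)$, and this is where the real work lies. Computing $\partial_x R(x,z)$ one finds that the $x$--derivatives of $-\bar G(u)B(z-u-x)$ and of $\bar G(u)\int_0^{z-u-x}V(z-u-s)\,dB(s)$ combine, via $1-V(x)=\bar V(x)$, into $\bar G(u)f_B(z-u-x)\bar V(x)$; the atom at the origin contributes $P(0)f_B(z-x)\bar V(x)$; and the Leibniz boundary term at $u=z-x$, generated by the $G(u)\bar V(z-u)$ piece, is again proportional to $\bar V(x)$. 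Every surviving term thus carries the common factor $\bar V(x)$, so after undoing the substitution ($w=z-x$) the remaining integral becomes independent of $x$ and equals $\lambda_2\bar V(x)$ with $\lambda_2=C_1+C_2+C_3$ as in \eqref{constant Cv}--\eqref{constant C3}. The hard part will be precisely this bookkeeping: keeping track of which configuration of \eqref{Lindley rec2} each term of \eqref{integral equation F} descends from, handling the atom at the origin (and the continuity of $G$ there) consistently throughout the Leibniz differentiation, and checking that no residual $x$--dependent vacation term survives, so that the coefficient of $\bar V(x)$ is genuinely a constant.
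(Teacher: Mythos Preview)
Your proposal is correct and follows essentially the same route as the paper: specialize \eqref{integral equation F} to Poisson arrivals via the change of variable $z=x+t$, differentiate in $x$, and collect terms so that the non-vacation part gives $\lambda P(0)\bar B(x)+\lambda\int_0^x\bar G(u)\bar B(x-u)f(u)\,du$ while the remaining pieces all carry the factor $\bar V(x)$.

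The organizational differences are minor but worth noting. The paper works with the pre-limit equation \eqref{integral equation F(n+1)} and differentiates each of the pieces $I_1,I_2,J_1,J_2,K_1,K_2,L$ separately, recognises that the sum of all the $\lambda\times(\cdot)$ terms reproduces $\lambda F_{n+1}(x)$, substitutes $F_{n+1}(x)=P_{n+1}(0)+\int_0^x f_{n+1}$, and finally lets $n\to\infty$. You instead start from the stationary equation directly and split the integrand into an $x$-independent part $H(z)$ and a windowed remainder $R(x,z)$; your identity $f(x)=\lambda[F(x)-H(x)]+\lambda e^{\lambda x}\int_x^\infty e^{-\lambda z}\partial_x R(x,z)\,dz$ is exactly the paper's equation \eqref{eq f_n+1} reorganized. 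Your packaging makes it more transparent \emph{why} every surviving vacation term is proportional to $\bar V(x)$: the $x$-derivatives of $-\bar G(u)B(z-u-x)$ and of $\bar G(u)\int_0^{z-u-x}V(z-u-s)\,dB(s)$ combine to $\bar G(u)f_B(z-u-x)\bar V(x)$, and the Leibniz boundary at $u=z-x$ contributes $G(z-x)f(z-x)\bar V(x)$, matching the three displayed $[1-V(x)]$ lines in \eqref{eq f_n+1}. (Note that the boundary term carries $G(z-x)f(z-x)$, consistent with the paper's proof; the statement's $C_3$ as written drops the factor $G(z-x)$, but after the substitution $w=z-x$ the resulting integral is a constant either way.)
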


\begin{remark}
This result is consistent with the crossing level method (see \cite{Brill}). For instance,
\cite{Katayama2011} and \cite{Sarhangian2013} applied directly this method to derive integral equations.
\end{remark}

\subsection{Case of exponentially distributed patience time}

\begin{lemma}
The density function of the waiting time in the $M/GI/G/1 + M$ system satisfies the integral equation

\begin{equation}
\label{integral equation with exponential patience}
f(x) = \lambda P(0) \bar{B}(x) + \lambda_{2} \bar{V}(x) + \lambda \int_{0}^{x} \bar{B}(x-u) e^{-\gamma u}f(u) du,\quad x > 0,
\end{equation}

\n
with the normalizing equation

\begin{equation}
\label{normalizing equation}
F(0) + \int_{0}^{\infty} f(x)dx = 1.
\end{equation}

\end{lemma}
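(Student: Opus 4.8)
The plan is to obtain \eqref{integral equation with exponential patience} as the specialization of the general integral equation of Proposition~\ref{stationary integral equation} to the case of exponentially distributed patience times, so that the argument is essentially a substitution rather than a fresh derivation.

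First I would record that in the $M/GI/G/1+M$ system the deadlines $(D_n)$ are i.i.d.\ with an exponential law of some rate $\gamma>0$, whence the tail is $\bar G(u) = \mathbb{P}(D>u) = e^{-\gamma u}$ and $G(u) = 1-e^{-\gamma u}$ for $u\ge 0$. All the hypotheses of Proposition~\ref{stationary integral equation} (Poisson arrivals, general service, a stationary workload admitting a density on $(0,\infty)$) are in force, so \eqref{stationary integral equaiton} holds verbatim.

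Next I would substitute $\bar G(u) = e^{-\gamma u}$ into \eqref{stationary integral equaiton}. The first two terms $\lambda P(0)\bar B(x)$ and $\lambda_2 \bar V(x)$ are left unchanged, while the convolution term becomes
\[
\lambda\int_0^x \bar G(u)\,\bar B(x-u)\,f(u)\,du \;=\; \lambda\int_0^x e^{-\gamma u}\,\bar B(x-u)\,f(u)\,du,
\]
which is exactly the last term of \eqref{integral equation with exponential patience}; the constant $\lambda_2 = C_1+C_2+C_3$ keeps the form \eqref{constant Cv}--\eqref{constant C3}, now with $\bar G(u)=e^{-\gamma u}$ inside $C_2$. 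This establishes the integral equation.

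Finally, for the normalizing condition I would note that the stationary law of the waiting time $W$ is of mixed type: it has an atom at $0$ of mass $P(0)$ --- the steady-state probability of an empty system, equivalently $\mathbb{P}(W=0)=F(0)$ --- and an absolutely continuous part on $(0,\infty)$ with density $f$. Hence $P(0)+\int_0^\infty f(x)\,dx = 1$ coincides with $F(0)+\int_0^\infty f(x)\,dx=1$, which is \eqref{normalizing equation}. The only point that needs a word of justification is precisely this atom-plus-density decomposition together with the identity $P(0)=F(0)$; granted Proposition~\ref{stationary integral equation}, this is routine bookkeeping, so I do not expect a genuine obstacle. (One could additionally simplify $C_1,C_2,C_3$ using the exponential form, but the statement does not require it.)
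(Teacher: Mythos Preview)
Your proposal is correct and matches the paper's approach: the lemma is an immediate specialization of Proposition~\ref{stationary integral equation} obtained by substituting $\bar G(u)=e^{-\gamma u}$, and the paper does not give a separate proof beyond this. Your remark that $P(0)=F(0)$ and the mixed atom-plus-density decomposition are exactly the bookkeeping needed for the normalizing condition.
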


\begin{proposition}
In the $M/GI/G/1 + M$ system, the LST of the virtual waiting time, $f^{*}$ is given by

\begin{equation}
\label{LST of f}
f^{*}(\theta) = \sum_{j=0}^{\infty}\left( P(0) + \dfrac{\lambda_2 V^{*}(\theta + j\omega_2)}{\lambda B^{*}(\theta+j\omega_2)} \right)\prod_{m=0}^{j}\lambda B^{*}(\theta + m\omega_2),
\end{equation}

\n
where

\begin{equation}
\label{lambda_2}
\lambda_2 =   \dfrac{\lambda F(0)}{q_{0}}, \qquad \textrm{with}\;\; q_{0} = V^{*}(\lambda),
\end{equation}

\n
and

\begin{equation}
\label{F(0)}
F(0) = \left( 1+  \sum_{j=0}^{\infty}\left( 1 + \dfrac{ V^{*}(\theta + j\omega_2)}{V^{*}(\lambda) B^{*}(\theta+j\omega_2)} \right)\prod_{m=0}^{j}\lambda B^{*}(\theta + m\omega_2) \right)^{-1}.
\end{equation}

\end{proposition}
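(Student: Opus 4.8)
**Proof plan for the Proposition giving the LST $f^{*}(\theta)$.**

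The starting point is the integral equation \eqref{integral equation with exponential patience} from the preceding Lemma, which I would treat as a convolution-type equation and attack by taking Laplace–Stieltjes transforms. Writing $f^{*}(\theta)=\int_{0}^{\infty}e^{-\theta x}f(x)\,dx$ and using that $\int_0^\infty e^{-\theta x}\bar B(x)\,dx = (1-B^{*}(\theta))/\theta$, the first obstacle is the last term $\lambda\int_0^x \bar B(x-u)e^{-\gamma u}f(u)\,du$: it is a convolution of $\bar B$ with the function $e^{-\gamma u}f(u)$, whose transform is $f^{*}(\theta+\gamma)$ (here $\gamma$ plays the role of $\omega_2$; I would reconcile the notation). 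Hence transforming \eqref{integral equation with exponential patience} gives a relation of the schematic form
\begin{equation*}
f^{*}(\theta) = \lambda B^{*}(\theta)\,f^{*}(\theta+\omega_2) + \lambda P(0)\,\frac{1-B^{*}(\theta)}{\theta}\cdot(\cdots) + \lambda_2\,V^{*}(\theta)\cdot(\cdots),
\end{equation*}
i.e. a \emph{linear functional (difference) equation} relating $f^{*}$ at $\theta$ to its value at the shifted argument $\theta+\omega_2$. After rearranging I expect something of the form $f^{*}(\theta) = \lambda B^{*}(\theta) f^{*}(\theta+\omega_2) + g(\theta)$, where $g$ collects the two known-transform terms.

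The second step is to solve this difference equation by iteration. Substituting the relation into itself $j$ times produces the telescoping product $\prod_{m=0}^{j}\lambda B^{*}(\theta+m\omega_2)$ multiplying $f^{*}(\theta+(j{+}1)\omega_2)$, plus a sum of the inhomogeneous terms $g(\theta+k\omega_2)$ each weighted by the partial product $\prod_{m=0}^{k-1}\lambda B^{*}(\theta+m\omega_2)$. Letting $j\to\infty$, one needs $\prod_{m=0}^{j}\lambda B^{*}(\theta+m\omega_2)\to 0$ (which holds because $\lambda B^{*}(\theta+m\omega_2)\to 0$ as $m\to\infty$, $B^{*}$ being the transform of a probability distribution evaluated at arguments tending to $+\infty$); this kills the boundary term and leaves the infinite series displayed in \eqref{LST of f}. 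Matching the coefficient structure — the bracket $\bigl(P(0)+\lambda_2 V^{*}(\theta+j\omega_2)/(\lambda B^{*}(\theta+j\omega_2))\bigr)$ times $\prod_{m=0}^{j}\lambda B^{*}(\theta+m\omega_2)$ — requires rewriting $g(\theta+k\omega_2)$ carefully, shifting the summation index by one so that the $V^{*}$-term acquires the extra factor $1/(\lambda B^{*}(\theta+j\omega_2))$ seen in the statement; this bookkeeping is the fussiest part.

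The third step is to pin down the two constants $P(0)$ and $\lambda_2$. For $\lambda_2$ I would use a probabilistic identity: $\lambda_2$ is (from \eqref{constant Cv}–\eqref{constant C3}) the rate of the idle-to-busy boundary behaviour, and evaluating the integral equation or a conservation/normalization relation at a distinguished point should yield $\lambda_2 = \lambda F(0)/V^{*}(\lambda)$; the factor $V^{*}(\lambda) = \int_0^\infty e^{-\lambda v}\,dV(v)$ is exactly the probability that no Poisson arrival occurs during a vacation, which is the natural quantity linking $F(0)$ (system found empty by an arrival) to $\lambda_2$. Then $P(0)$ — or rather $F(0)$, with $P(0)$ expressed through it — is determined by the normalizing equation \eqref{normalizing equation}: setting $\theta=0$ in \eqref{LST of f} gives $f^{*}(0)=\int_0^\infty f=1-F(0)$, and solving that scalar equation for $F(0)$ produces \eqref{F(0)}.

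The main obstacle I anticipate is not any single computation but the index-shifting/normalization algebra in steps two and three: getting the inhomogeneous series to collapse into the exact bracket-times-product form of \eqref{LST of f}, and correctly identifying which boundary quantity ($P(0)$ vs.\ $F(0)$) is determined by normalization versus by the vacation mechanism. One must also check convergence of the infinite product and series for $\mathrm{Re}(\theta)\ge 0$ to justify the termwise manipulations; this uses $|\lambda B^{*}(\theta+m\omega_2)|<1$ for $m$ large together with $\omega_2>0$, which I would state as a lemma-free remark since it follows from dominated convergence.
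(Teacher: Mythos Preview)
Your plan is correct and is in fact more explicit than the paper's own proof: the paper simply invokes a result of Jagerman (2000) to pass from the integral equation to the series expression \eqref{LST of f}, and then obtains $F(0)$ exactly as you describe, by letting $\theta\to 0$ and using the normalizing equation $f^{*}(0)=1-F(0)$. Your transform--iterate--normalize outline is precisely the content of that cited result, so you are effectively reconstructing what the paper outsources; the only caveat is to be careful that the $B^{*}$ appearing in the recursion $f^{*}(\theta)=\lambda B^{*}(\theta)f^{*}(\theta+\omega_2)+g(\theta)$ is the Laplace transform of $\bar B$ (not the LST of $B$), which is what makes the product $\prod_m \lambda B^{*}(\theta+m\omega_2)$ come out cleanly.
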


\begin{proof}
Equation~\eqref{LST of f} is obtained using the solution in \cite{Jagerman2000}. This equation helps to find $F(0)$, the probability of having an idle system. Indeed, letting $\theta \rightarrow 0$ in both sides of Equation~\eqref{LST of f} and using the normalizing equation~\eqref{normalizing equation}, we get $f^{*}(0) = 1 - F(0)$ which yields \eqref{F(0)}.
\end{proof}

\section{Tail behaviour of the waiting time distribution}
\label{section 5}

In this section, we investigate the tail behaviour of the steady state waiting time distribution. We use only information from the given distributions of the service and vacation times. 
This information is relevant when, for example, the distribution $F$ cannot be computed exactly.
We consider the class of heavy tail distributions which plays a major role in the analysis of many stochastic systems and especially in communication networks. 
The defining property of heavy-tailed distributions is that their tails decrease more slowly than an exponential tail. A subset of particular interest in queueing theory is that of long-tailed distributions. Intuitively, this means that if $X > x$, for some large $x$, then it is likely that X exceeds any larger value as well.
We focus on the impact of a long-tail service time distribution upon the distribution of the steady-state waiting time in the $M/G/1+M$ queue with single vacations under FIFO discipline. We do this by exploiting the functional equation for the steady state p.d.f of the waiting time derived in the previous section.
\vspace{0.2cm}

We start by introducing some notation. For any two real functions $f(.)$ and $g(.)$ we use the conventional notation $f(x) \underset{\infty}{\sim}g(x)$ to denote $\underset{x \rightarrow \infty} {\lim} f(x)/g(x) = 1$. For any positive random variable $X$ with distribution function $F$ and having finite expectation $\mathbb{E}(X) < \infty$, we define the integrated tail distribution $F^{r}$ by
\begin{equation*}
F^{r}(x) = \dfrac{1}{\mathbb{E}(X)}\int_{0}^{x}\bar{F}(y) dy, \quad x \geq 0,
\end{equation*} 
The mean of the service and vacation time are denoted by $\beta$ and $\vartheta$, respectively, and are such that
\begin{equation*}
\int x \, dB(x) = \beta < \infty \quad \mbox{ and } \quad \int x \, dV(x) = \vartheta < \infty.
\end{equation*}

\n We write $\mathcal{L}$ for the class of long tail distributions (see Definition~\ref{chap1-proba:definition long tailed distribution}). The function $\bar{F}$ is a tail function, if and only if (see \cite{Smith1972})
\begin{equation*}
\bar{F}(x) \sim \exp\left\lbrace - \int_{0}^{x} \lambda(u) du \right\rbrace,
\end{equation*}

\n 
where $\lambda(u) \downarrow 0$. Define also, $\Lambda(x) := \int_{0}^{x} \lambda(u) du$, and we consider all the distribution functions belonging to $\mathcal{L}$ and satisfying
\begin{equation}
\label{regularity condition smith72}
\varlimsup_{x \to \infty} \Lambda(2x)/\Lambda(x) < 2.
\end{equation}

\begin{remark}
All distribution functions belonging to $\mathcal{L}$ satisfying condition~\eqref{regularity condition smith72} are called by Smith in \cite{Smith1972} subexponential distributions. The standard definition for such distributions is the following: a distribution function $F$ on $[0,\infty)$ belongs to the subexponential class denoted by $\mathcal{S}$ if and only if $1-F^{(2)}(x) \sim 1-F(x)$, as $x \rightarrow \infty$, where $F^{(2)}$ is the convolution of $F$ with itself. The class of subexponential distribution functions is an important subclass of heavy-tailed distributions and was introduce by Chistyakov \cite{Chistyakov1964}.
A more complete account of subexponential distribution functions can be found in \cite{Foss} or in \cite{GoldieKluppelberg, Pitman1980, Kluppelberg1988, Teugels1975, Murphree1989}.

\end{remark}


We have derived in the previous subsection the integral equation of the steady-state p.d.f. of the waiting time for the $M/G/1+M$ with single vacations queueing model. This p.d.f is given by
\begin{equation}
\label{integral equation f}
f(z) = F(0)\lambda\bar{B}(z) + \lambda_{2}\bar{V}(z) + \lambda \int_{0}^{z} \bar{B}(z-u) e^{-\gamma u} f(u) du, \quad z >0,
\end{equation}
where the constants $\lambda_{1}$, $\lambda_{2}$ and $\gamma$ are supposed to be known. Starting from this equation it will be shown that if $B^r$ is a long-tailed distribution such that the condition~\eqref{regularity condition smith72} is fulfilled then $B^r$ and $F$ are asymptotically equivalent and $F$ is a long-tailed distribution, and conversely. Let $f^*$ be the Laplace-Stieltjes Transform (LST) of the stationary waiting time distribution $F$
\begin{equation*}
f^{*}(s) = \int_{0^-}^{\infty} e^{-s x} dF(x), \;\; \Re(s) \geq 0.
\end{equation*}

\n Define $F_{\gamma}$ the distribution function associated with $F$, by
\begin{equation}
dF_{\gamma} (x) := e^{-\gamma x} dF(x) / f^*(\gamma), \quad x \geq 0,
\end{equation}

\n where $\gamma$ is the parameter of the patience time distribution. Equation~\eqref{integral equation f} becomes
\begin{equation}
\label{integral equation of f wrt f_gamma}
f(z) = F(0)\lambda\bar{B}(z) + \lambda_{2}\bar{V}(z) + \lambda f^{*}(\gamma) \int_{0}^{z}\bar{B}(z-u) \dfrac{f_{\gamma}(u)}{f^{*}(\gamma)} \, du, \quad z > 0.
\end{equation}

\n Integrating \eqref{integral equation of f wrt f_gamma} over $]0,x]$ with respect to the Lebesgue measure leads to
\begin{equation}
\label{integral equation of the df F wrt df_gamma}
F(x) =  F(0) + \rho_{2}V^{r}(x) + f^{*}(\gamma) \rho_{1}\int_{0^-}^{x} B^{r}(x-u)dF_{\gamma}(u), \quad x \geq 0,
\end{equation}

\n
where $\rho_{1} := \lambda \beta$, $\rho_{2} := \lambda_{2} \vartheta$ are such that $\rho_{1} + \rho_2 < 1$. Note that $F(0)$ and $f^*(\gamma)$ are still unknown. However, letting $t \uparrow \infty$ in \eqref{integral equation of the df F wrt df_gamma}, we have $1 = F(0) + \rho_{2}  \rho_{1} f^{*}(\gamma)$. Moreover, using the characterisation of $\lambda_2$ given by Equation~\eqref{lambda_2}, we find both $F(0)$ and $f^*(\gamma)$. 
Note that the value of the LST at point $\gamma$ depends on $F(0)$ since $f^*(\gamma) = F(0) + \int_{0^+}^{\infty} \, e^{-sx}dF(x)$. Using the condition $1 = F(0) + \rho_{2}  \rho_{1} f^{*}(\gamma)$ and performing an integration by part, we can rewrite Equation~\eqref{integral equation of the df F wrt df_gamma}, as follows in terms of $\bar{F}$, as follows
\begin{equation}
\label{integral equation of F-bar wrt dB^r}
\bar{F}(x) = \rho_{1}f^{*}(\gamma)\bar{B}^{r}(x) + \rho_{2}\bar{V}^{r}(x) +  \rho_{1}f^{*}(\gamma) \int_{0}^{x}\bar{F}_{\gamma}(x-u)dB^{r}(u), \quad x \geq 0.
\end{equation}

\n An alternative of \eqref{integral equation of F-bar wrt dB^r} is given by
\begin{equation}
\label{integral equation of F-bar wrt dF_gamma}
\bar{F}(x) = \rho_{1}f^{*}(\gamma)\bar{F}_{\gamma}(x) + \rho_{2}\bar{V}^{r}(x) +  \rho_{1}f^{*}(\gamma) \int_{0^-}^{x}\bar{B}^{r}(x-u)\,dF_{\gamma}(x-u), \quad x \geq 0.
\end{equation}

\begin{remark}
Equation~\eqref{integral equation of F-bar wrt dB^r} is of convolution type and looks like a renewal equation. There is however two important differences. Since $f^*(\gamma)\rho_1 < 1$, $\int_{0^-}^{x}\bar{F}_{\gamma}(x-u)d\left(f^*(\gamma)\rho_1 B^{r}(u)\right)$ is an integral with respect to a defective distribution. Furthermore, the integrand in the integral does not coincide with $\bar{F}$ which is the left hand side of \eqref{integral equation of F-bar wrt dB^r}. As a consequence, \eqref{integral equation of F-bar wrt dB^r} is not a renewal equation (or rather, a defective renewal equation) and does not open the door to renewal theory. The steady-state waiting-time tail probability for standard queueing models (without both impatience and vacations) has been widely investigated under various assumption on the service time distribution.
\end{remark}

We state the asymptotic equivalence under the following assumptions:

\begin{description}
 \item [H.1] The distribution function $V$ is lighter than $B$ in the sense that $\underset{x \to \infty}{\lim} \dfrac{\bar{V}^{r}(x)}{\bar{B}^{r}(x)} = 0 $. 
\item [H.2] The distribution function $V$ is lighter than $F$ in the sense that $\underset{x \to \infty}{\lim} \dfrac{\bar{V}^{r}(x)}{\bar{F}(x)} = 0$.
\item [H.3] There exist $M > 0$ such that $\dfrac{\bar{F}(x)}{\bar{B}(x)} \leq M$ for all $x \in [0, +\infty[$.
\end{description}

\begin{theorem}
\label{chap2-proba:theorem asymptotic equivalence}
Assume assumptions \textbf{H.1}-\textbf{H.3} hold. If $B^r \in \mathcal{L}$ and fulfills condition~\eqref{regularity condition smith72}, then 
\begin{equation}
\label{chap2-proba: theorem tail equivalence between F-bar and B-bar}
 \bar{F}(x) \sim \rho_{1}f^{*}(\gamma) \bar{B}^{r}(x), \quad x \to \infty.
\end{equation} 
and $F \in \mathcal{L}$. Conversely, If $F \in \mathcal{L}$ such that condition~\eqref{regularity condition smith72} is fulfilled, then \eqref{chap2-proba: theorem tail equivalence between F-bar and B-bar} holds and $B^r \in \mathcal{L}$.
\end{theorem}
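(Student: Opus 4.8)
The plan is to work from Equation~\eqref{integral equation of F-bar wrt dB^r}, which already has the shape of a (defective) renewal-type equation with respect to the distribution $B^{r}$. Writing $c := \rho_{1}f^{*}(\gamma) < 1$, it reads
\[
\bar{F}(x) = c\,\bar{B}^{r}(x) + \rho_{2}\bar{V}^{r}(x) + c\int_{0}^{x}\bar{F}_{\gamma}(x-u)\,dB^{r}(u).
\]
First I would dispose of the vacation term: by \textbf{H.1} it is $o(\bar{B}^{r}(x))$, so it contributes nothing to the asymptotics and can be absorbed into error terms throughout. The core of the argument is then to show that the convolution integral $\int_{0}^{x}\bar{F}_{\gamma}(x-u)\,dB^{r}(u)$ is asymptotically $\bar{B}^{r}(x)\cdot\int_{0}^{\infty}dF_{\gamma}$ plus a term comparable to $\bar{F}(x)$, using that $B^{r}\in\mathcal{L}$ together with condition~\eqref{regularity condition smith72}. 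Concretely, the standard route is: split the integral at $x/2$; on $[0,x/2]$ use the long-tail property $\bar{B}^{r}(x-u)\sim\bar{B}^{r}(x)$ uniformly for $u$ in compact sets together with a dominated-convergence argument controlled by \textbf{H.3} (which, via the integrated-tail relation, gives $\bar{F}_{\gamma}(x-u)\le \text{const}\cdot\bar{B}^{r}(x-u)$, making the tail of $F_{\gamma}$ integrable against $dB^{r}$); on $[x/2,x]$ use condition~\eqref{regularity condition smith72} in the form of the Kesten-type bound (Smith's subexponential estimate) to show this piece is $o(\bar{B}^{r}(x))$ after using \textbf{H.2}/\textbf{H.3} to compare $\bar{F}$ and $\bar{B}^{r}$.

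The cleanest way to organize the forward direction is to set $L(x) := \bar{F}(x)/\bar{B}^{r}(x)$, which is bounded above by \textbf{H.3} combined with the relation between $\bar F$ and $\bar B$ passing to integrated tails (this is exactly why \textbf{H.3} is stated pointwise on all of $[0,\infty)$, so the bound survives the integration defining $B^{r}$). Substituting into the displayed equation and dividing by $\bar{B}^{r}(x)$, one gets
\[
L(x) = c + o(1) + c\int_{0}^{x}\frac{\bar{F}_{\gamma}(x-u)}{\bar{B}^{r}(x)}\,dB^{r}(u),
\]
and the plan is to show $\varlimsup_{x\to\infty}L(x) = \varliminf_{x\to\infty}L(x)$ by feeding the bound on $L$ back into the integral: the $[0,x/2]$ part tends to $c\cdot f^{*}(\gamma)^{-1}\int e^{-\gamma u}dF(u)\cdot(\dots)$ — more precisely, since $\int_0^\infty dF_\gamma = 1$, one identifies the limit contribution as $c$ times $\bar F_\gamma$'s total mass in the convolution, yielding after rearrangement $\bar F(x)\sim c\,\bar B^r(x)$. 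Once the limit $L(\infty)$ is pinned down, letting $x\to\infty$ in the relation $1 = F(0) + \rho_{2} + \rho_{1}f^{*}(\gamma)$ (the mass condition derived just before~\eqref{integral equation of F-bar wrt dB^r}) and matching constants confirms the coefficient is exactly $\rho_{1}f^{*}(\gamma)$. That $F\in\mathcal{L}$ is then immediate: $\bar F$ is asymptotically a positive constant times $\bar B^{r}$, and $B^{r}\in\mathcal{L}$ with membership in $\mathcal{L}$ being preserved under tail-equivalence.

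For the converse, assuming $F\in\mathcal{L}$ and condition~\eqref{regularity condition smith72}, I would run the same splitting argument on~\eqref{integral equation of F-bar wrt dF_gamma}, the alternative form written with respect to $dF_{\gamma}$ and with $\bar{F}_{\gamma}$ pulled out of the convolution: the long-tail hypothesis is now on $F$ (hence on $F_{\gamma}$, since $dF_\gamma = e^{-\gamma x}dF(x)/f^*(\gamma)$ only reweights by a bounded density and $\bar F_\gamma$ inherits the long-tail/subexponential behaviour), and the roles of $B^{r}$ and $F$ are swapped, giving $\bar{B}^{r}(x)\sim(\rho_{1}f^{*}(\gamma))^{-1}\bar F(x)$ and therefore $B^{r}\in\mathcal{L}$. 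The main obstacle, and the step needing the most care, is the uniform control of the $[x/2,x]$ piece of the convolution: here one genuinely needs the regularity condition~\eqref{regularity condition smith72} (not merely $B^r\in\mathcal L$) to get the Kesten-type uniform bound $\sup_{n}\bar{B}^{r\,*n}(x)/\bar{B}^{r}(x) \le K^{n}$ that makes the dominated-convergence step legitimate, and one must check that hypotheses \textbf{H.2}–\textbf{H.3} are precisely what is needed to transfer this bound from $B^{r}$ to the mixed convolution $\bar F_\gamma * dB^r$. I also expect a minor technical nuisance in justifying that $\bar F_\gamma(x) = O(\bar B^r(x))$, which requires combining \textbf{H.3} with the elementary inequality $\bar F_\gamma(x) \le \bar F(x)/f^*(\gamma)$ and the passage from tails to integrated tails; this is routine but must be done before the dominated-convergence argument can even be set up.
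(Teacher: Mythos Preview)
Your overall strategy matches the paper's: use the convolution-type equations \eqref{integral equation of F-bar wrt dB^r} and \eqref{integral equation of F-bar wrt dF_gamma}, split the integral, control the pieces via the long-tail property of $B^r$ (resp.\ $F$) and the regularity condition~\eqref{regularity condition smith72}, and kill the vacation term with \textbf{H.1}/\textbf{H.2}. The paper organizes this slightly differently: it separates the argument into an unconditional lower bound $\varliminf \bar F/\bar B^r \ge \rho_1 f^*(\gamma)$ (Lemma~\ref{chap2-proba:lemma1 tail asymptotic:limitinf}, obtained trivially by dropping the nonnegative convolution term), a conditional upper bound (Lemma~\ref{chap2-proba:lemma2 tail asymptotic:limitsup}), and the symmetric lemma for the converse. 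The split is at fixed $\Delta$ and $x-\Delta$ (three pieces, then $\Delta\to\infty$) rather than at $x/2$, and the middle piece is controlled directly by Smith's Lemma~2.2, which is exactly what condition~\eqref{regularity condition smith72} buys.

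There is one genuine confusion in your plan worth flagging. You invoke a Kesten-type bound $\sup_n \bar B^{r*n}(x)/\bar B^r(x)\le K^n$ for iterated convolutions. This is not needed and not what the argument uses: as the paper's own Remark after~\eqref{integral equation of F-bar wrt dF_gamma} stresses, the equation is \emph{not} a renewal equation, because the integrand $\bar F_\gamma$ is not the unknown $\bar F$. There is no iteration to control. What condition~\eqref{regularity condition smith72} actually provides (via Smith) is the single-convolution estimate
\[
\limsup_{x\to\infty}\frac{1}{\bar B^r(x)}\int_\Delta^{x-\Delta}\bar B^r(x-u)\,dB^r(u)\le \varepsilon
\]
for $\Delta$ large, and this together with $\bar F_\gamma(y)\le e^{-\gamma y}\bar F(y)/f^*(\gamma)$ and \textbf{H.3} is enough to bound the middle piece. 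Your ``bootstrapping $L(x)$'' and dominated-convergence framing are fine, but the key technical input is this single-convolution bound, not Kesten's lemma. Once you make that substitution, your plan and the paper's proof are essentially the same argument.
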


\n
To prove this theorem we need to

\begin{lemma}
\label{chap2-proba:lemma1 tail asymptotic:limitinf}
Under assumptions \textbf{H.1}-\textbf{H.3}, we have
\begin{equation*}
\liminf_{x \to \infty} \dfrac{\bar{F}(x)}{\bar{B}^{r}(x)} \geq \rho_{1}f^{*}(\gamma).
\end{equation*}
\end{lemma}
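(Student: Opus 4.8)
The plan is to read the inequality off directly from the integral identity \eqref{integral equation of F-bar wrt dB^r} established just above the statement, namely
\begin{equation*}
\bar{F}(x) = \rho_{1}f^{*}(\gamma)\bar{B}^{r}(x) + \rho_{2}\bar{V}^{r}(x) + \rho_{1}f^{*}(\gamma)\int_{0}^{x}\bar{F}_{\gamma}(x-u)\,dB^{r}(u), \quad x \geq 0 .
\end{equation*}
Every term on the right is non-negative: $\rho_{1}=\lambda\beta>0$, $\rho_{2}=\lambda_{2}\vartheta\ge 0$, and $f^{*}(\gamma)=\int_{0^-}^{\infty}e^{-\gamma x}\,dF(x)\in(0,1]$; $\bar{V}^{r}$ and $\bar{F}_{\gamma}$ are tails of bona fide distribution functions; and $B^{r}$ is a non-decreasing distribution function, so $dB^{r}$ is a non-negative measure and the last integral is $\ge 0$. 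I would therefore simply discard the last two summands to get the pointwise inequality $\bar{F}(x)\ge \rho_{1}f^{*}(\gamma)\,\bar{B}^{r}(x)$ for all $x\ge 0$, divide by $\bar{B}^{r}(x)>0$, and let $x\to\infty$ to obtain $\liminf_{x\to\infty}\bar{F}(x)/\bar{B}^{r}(x)\ge \rho_{1}f^{*}(\gamma)$.

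An alternative route, which I would actually prefer to write out because it prefigures the argument for the matching $\limsup$ inequality, starts from \eqref{integral equation of F-bar wrt dF_gamma} instead: discarding its non-negative terms $\rho_{1}f^{*}(\gamma)\bar{F}_{\gamma}(x)$ and $\rho_{2}\bar{V}^{r}(x)$ leaves
\begin{equation*}
\bar{F}(x) \geq \rho_{1}f^{*}(\gamma)\int_{0^-}^{x}\bar{B}^{r}(x-u)\,dF_{\gamma}(u) ,
\end{equation*}
and then, using $B^{r}\in\mathcal{L}$ so that $\bar{B}^{r}(x-u)/\bar{B}^{r}(x)\to 1$ for each fixed $u$, Fatou's lemma applied to the probability measure $dF_{\gamma}$ yields $\liminf_{x\to\infty}\bar{F}(x)/\bar{B}^{r}(x)\ge \rho_{1}f^{*}(\gamma)\int_{0^-}^{\infty}1\,dF_{\gamma}(u)=\rho_{1}f^{*}(\gamma)$.

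I do not expect a real obstacle here: this is the easy half of Theorem~\ref{chap2-proba:theorem asymptotic equivalence}, and the first route needs neither the subexponentiality hypothesis nor \textbf{H.1}--\textbf{H.3} (the second route uses only $B^{r}\in\mathcal{L}$). The one point requiring a little care is bookkeeping, namely confirming that the coefficient of $\bar{B}^{r}(x)$ in \eqref{integral equation of F-bar wrt dB^r} is indeed $\rho_{1}f^{*}(\gamma)$ and strictly positive; this follows from the normalisation identity $F(0)+\rho_{2}+\rho_{1}f^{*}(\gamma)=1$ obtained by letting $x\uparrow\infty$ in \eqref{integral equation of the df F wrt df_gamma}, together with $\rho_{1},f^{*}(\gamma)>0$. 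The genuinely hard work — bounding the convolution integral $\int_{0}^{x}\bar{F}_{\gamma}(x-u)\,dB^{r}(u)$ from \emph{above} using the subexponentiality of $B^{r}$ and \textbf{H.1}--\textbf{H.3} — is deferred to the companion $\limsup$ lemma and the proof of the theorem, not to this one.
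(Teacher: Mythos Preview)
Your proposal is correct. The paper's proof follows the same spirit but starts from the other integral representation \eqref{integral equation of F-bar wrt dF_gamma}: it keeps all three terms, bounds the convolution $\int_{0^-}^{x}\bar{B}^{r}(x-u)\,dF_{\gamma}(u)$ from below by $\bar{B}^{r}(x)F_{\gamma}(x)$ using the monotonicity of $\bar{B}^{r}$, divides by $\bar{B}^{r}(x)$, and then lets $x\to\infty$, invoking \textbf{H.1} for the $\bar{V}^{r}/\bar{B}^{r}$ term and $F_{\gamma}(x)\to 1$ for the last. Your first route from \eqref{integral equation of F-bar wrt dB^r} is the more economical version of the same argument, since the term $\rho_{1}f^{*}(\gamma)\bar{B}^{r}(x)$ already appears explicitly and neither the monotonicity step nor \textbf{H.1} is needed; your observation that the lemma in fact requires none of \textbf{H.1}--\textbf{H.3} is correct. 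Your second (Fatou) route also works but imports the hypothesis $B^{r}\in\mathcal{L}$, which the lemma as stated does not assume; the paper's monotonicity bound achieves the same conclusion without it.
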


\begin{lemma}
\label{chap2-proba:lemma2 tail asymptotic:limitsup}
Under assumptions \textbf{H.1}-\textbf{H.3}, if $B^r \in \mathcal{L}$ and satisfies \eqref{regularity condition smith72}, then
\begin{equation*}
\limsup_{x \to \infty} \dfrac{\bar{F}(x)}{\bar{B}^{r}(x)} \leq \rho_{1} f^{*}(\gamma).
\end{equation*}
\end{lemma}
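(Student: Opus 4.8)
\emph{Proof proposal.} The plan is to read the estimate off the convolution form of the steady-state integral equation derived above, using that the Smith condition~\eqref{regularity condition smith72} upgrades $B^r\in\mathcal L$ to a subexponential tail. The starting point is equation~\eqref{integral equation of F-bar wrt dF_gamma},
\[
\bar F(x)=\rho_1 f^*(\gamma)\,\bar F_\gamma(x)+\rho_2\bar V^r(x)+\rho_1 f^*(\gamma)\int_{0^-}^{x}\bar B^r(x-u)\,dF_\gamma(u),\qquad x\ge 0,
\]
equivalently the identity $\overline{B^r*F_\gamma}(x)=\bar F_\gamma(x)+\int_{0^-}^{x}\bar B^r(x-u)\,dF_\gamma(u)$. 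Dividing by $\bar B^r(x)$ and letting $x\to\infty$ reduces everything to showing $\int_{0^-}^{x}\bar B^r(x-u)\,dF_\gamma(u)\sim\bar B^r(x)$: granted this, assumption \textbf{H.1} disposes of $\rho_2\bar V^r(x)=o(\bar B^r(x))$ and the light-tailedness of $F_\gamma$ (step~(i) below) disposes of $\rho_1 f^*(\gamma)\bar F_\gamma(x)=o(\bar B^r(x))$, leaving $\bar F(x)=\rho_1 f^*(\gamma)\bar B^r(x)(1+o(1))$, which yields the desired bound $\limsup_{x\to\infty}\bar F(x)/\bar B^r(x)\le\rho_1 f^*(\gamma)$ (indeed the exact asymptotics).

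First I would record two structural facts. (i) Since $dF_\gamma(u)=e^{-\gamma u}\,dF(u)/f^*(\gamma)$, one has $\bar F_\gamma(x)\le e^{-\gamma x}/f^*(\gamma)$, so $F_\gamma$ is a \emph{proper} distribution with exponentially light tails and finite mean, and---because $B^r\in\mathcal L$ forces $e^{\varepsilon x}\bar B^r(x)\to\infty$ for every $\varepsilon>0$---$\bar F_\gamma(x)=o(\bar B^r(x))$. (ii) By the characterisation recalled in the remark following~\eqref{regularity condition smith72} (cf.\ \cite{Smith1972}), $B^r\in\mathcal L$ together with~\eqref{regularity condition smith72} puts $B^r$ in the subexponential class $\mathcal S$; in particular $B^r$ is $h$-insensitive (there is $h(x)\uparrow\infty$, $h(x)=o(x)$, with $\bar B^r(x-h(x))\sim\bar B^r(x)$) and satisfies a submultiplicative upper bound $\bar B^r(x-y)\le C\,e^{\delta y}\bar B^r(x)$ for some $\delta<\gamma$ and all $0\le y\le x$. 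Then the estimate $\overline{B^r*F_\gamma}(x)\sim\bar B^r(x)$ is the standard fact that a subexponential tail absorbs a lighter one under convolution (see, e.g., \cite{Foss}): one splits $\int_{0^-}^{x}\bar B^r(x-u)\,dF_\gamma(u)$ at $u=x-h(x)$; on $[x-h(x),x]$ substitute $v=x-u$ and use $\int_0^\infty\bar F_\gamma(v)\,dv<\infty$ together with $\bar B^r(x-v)/\bar B^r(x)\to1$ uniformly for $v\in[0,h(x)]$; on $[0,x-h(x)]$ the $F_\gamma$-mass beyond $h(x)$ is exponentially small and, by the submultiplicative bound in (ii), controls the residual tail of $\bar B^r$. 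The two pieces add up to $\bar B^r(x)(1+o(1))$.

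The hard part is step~(ii)---making the convolution estimate $\overline{B^r*F_\gamma}(x)\sim\bar B^r(x)$ airtight---and this is exactly where~\eqref{regularity condition smith72} is indispensable rather than cosmetic. The delicate contribution is the ``middle range'' of the convolution, where one needs the ratio $\bar B^r(x-y)/\bar B^r(x)$ to be controllable against the exponential decay of $\bar F_\gamma$; without~\eqref{regularity condition smith72} the tail $\bar B^r$ may decay so nearly exponentially (for instance $\bar B^r(x)\asymp e^{-x/\log x}$, for which $\Lambda(2x)/\Lambda(x)\to2$) that $\bar B^r(x-h)/\bar B^r(x)$ outgrows $e^{\gamma h}$ for the $h$'s one is forced to use, the light convolutand $F_\gamma$ does perturb the tail, and the conclusion genuinely fails. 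Put differently, this is the point at which $B^r\in\mathcal S$, and not merely $B^r\in\mathcal L$, is used. Everything else---the algebra of the integral equation, discarding $\bar V^r$ via \textbf{H.1}, discarding $\bar F_\gamma$ via (i), and, if one wishes to route the estimate through $\bar F$ instead of $\bar B^r$, invoking \textbf{H.2}--\textbf{H.3}---is routine bookkeeping.
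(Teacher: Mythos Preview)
Your high-level strategy is correct and in fact delivers more than the lemma asks: by reading off equation~\eqref{integral equation of F-bar wrt dF_gamma} and invoking the standard subexponential convolution closure (if $G\in\mathcal S$ and $\bar H=o(\bar G)$ then $\overline{G*H}\sim\bar G$, applied with $G=B^r$ and $H=F_\gamma$), you obtain the exact asymptotic $\bar F(x)\sim\rho_1 f^*(\gamma)\bar B^r(x)$, hence both the $\limsup$ bound here and Lemma~\ref{chap2-proba:lemma1 tail asymptotic:limitinf} at once. This is a genuinely different route from the paper's proof. The paper starts instead from the companion identity~\eqref{integral equation of F-bar wrt dB^r} (integration against $dB^r$, integrand $\bar F_\gamma$), splits the convolution integral at fixed levels $\Delta$ and $x-\Delta$, and controls the middle piece by first bounding $\bar F_\gamma(x-u)\le e^{-\gamma\Delta}\bar F(x-u)/f^*(\gamma)$, then invoking \textbf{H.3} to replace $\bar F$ by $M\bar B^r$, and finally applying Smith's Lemma~2.2 (the self-convolution estimate $\limsup\bar B^r(x)^{-1}\int_\Delta^{x-\Delta}\bar B^r(x-u)\,dB^r(u)\le\varepsilon$) before sending $\Delta\to\infty$. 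The trade-off: the paper's argument is elementary and self-contained from Smith's lemma but leans essentially on \textbf{H.3}; your argument packages the analysis into a citable black box and shows \textbf{H.3} (and \textbf{H.2}) are in fact superfluous for this direction, since the exponential tail $\bar F_\gamma(x)\le e^{-\gamma x}/f^*(\gamma)$ already makes $F_\gamma$ negligible against any long-tailed $B^r$.

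One small correction to your write-up: the splitting you sketch is garbled. Splitting $\int_{0^-}^x\bar B^r(x-u)\,dF_\gamma(u)$ at $u=x-h(x)$ and then speaking of ``$F_\gamma$-mass beyond $h(x)$'' on $[0,x-h(x)]$ does not match; the natural cut is at $u=h(x)$, so that on $[0,h(x)]$ one uses $h$-insensitivity $\bar B^r(x-u)\sim\bar B^r(x)$ uniformly, while on $[h(x),x]$ the remaining $F_\gamma$-mass $\bar F_\gamma(h(x))\le e^{-\gamma h(x)}/f^*(\gamma)$ is what must be shown $o(\bar B^r(x))$ (and here is where the submultiplicative/hazard control from~\eqref{regularity condition smith72} enters, to pick $h(x)$ growing fast enough). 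The appeal to $\int_0^\infty\bar F_\gamma(v)\,dv<\infty$ plays no role in this version. Since you ultimately defer to \cite{Foss} for the convolution lemma, this slip is cosmetic, but it is worth tidying.
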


\begin{lemma}
\label{chap2-proba:lemma3 tail asymptotic:limitinf with F longtail}
Under assumptions \textbf{H.1}-\textbf{H.3}, if $F \in \mathcal{L}$ such that condition~\eqref{regularity condition smith72} holds, then
\begin{equation*}
\liminf_{x \to \infty} \dfrac{\bar{B}^{r}(x)}{\bar{F}(x)} \geq \dfrac{1}{\rho_{1} f^{*}(\gamma)}.
\end{equation*}
\end{lemma}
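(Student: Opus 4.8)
The plan is to start from the convolution-type identity \eqref{integral equation of F-bar wrt dF_gamma}, which expresses $\bar F$ in terms of $\bar F_\gamma$, $\bar V^r$ and a convolution of $\bar B^r$ against $dF_\gamma$, and extract from it a lower bound on $\bar B^r(x)/\bar F(x)$ valid in the limit. First I would isolate the term $\rho_1 f^*(\gamma)\bar F_\gamma(x)$ on the right-hand side and observe that, since $\rho_1 f^*(\gamma)<1$ (recall $\rho_1+\rho_2<1$ and $F(0)=1-\rho_1\rho_2 f^*(\gamma)$ forces $f^*(\gamma)\rho_1$ to be a genuine subprobability mass), the naive bound $\bar F(x)\ge \rho_1 f^*(\gamma)\bar F_\gamma(x)$ is not by itself enough, because $F_\gamma$ is an exponentially tilted version of $F$ and hence has a strictly lighter tail. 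So the real content must come from the convolution term $\rho_1 f^*(\gamma)\int_{0^-}^{x}\bar B^r(x-u)\,dF_\gamma(u)$, together with the elementary term $\rho_1 f^*(\gamma)\bar B^r(x)$ in \eqref{integral equation of F-bar wrt dB^r}.

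The key step is a long-tailedness argument. Since $F\in\mathcal L$ and satisfies the regularity condition~\eqref{regularity condition smith72}, $F$ is subexponential in Smith's sense, and in particular $\bar F(x-t)\sim\bar F(x)$ for every fixed $t$, with the convergence locally uniform. I would combine this with assumption \textbf{H.2} to discard $\bar V^r(x)=o(\bar F(x))$, and with \textbf{H.3} ($\bar F(x)\le M\bar B(x)$, hence a comparable control of $\bar B^r$ versus $\bar F$ after integration) to get a crude two-sided comparability of $\bar B^r$ and $\bar F$ that prevents degenerate behaviour. Dividing \eqref{integral equation of F-bar wrt dB^r} through by $\bar B^r(x)$, or better dividing \eqref{integral equation of F-bar wrt dF_gamma} through by $\bar F(x)$, and using $\bar F_\gamma(x)=o(\bar F(x))$ (exponential tilting strictly lightens a long tail) together with \textbf{H.2}, the only surviving contributions on the right as $x\to\infty$ are $\rho_1 f^*(\gamma)\bar B^r(x)/\bar F(x)$ plus the convolution term. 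A Fatou/monotonicity estimate on $\int_0^x \bar B^r(x-u)\,dF_\gamma(u)$, splitting the range of integration at a fixed level $A$ and at $x-A$ and using $\bar B^r(x-u)\ge \bar B^r(x)$ for $u\ge 0$, then yields
\[
1=\lim_{x\to\infty}\frac{\bar F(x)}{\bar F(x)}\ \le\ \rho_1 f^*(\gamma)\,\liminf_{x\to\infty}\frac{\bar B^r(x)}{\bar F(x)}\cdot\bigl(1+o(1)\bigr),
\]
which rearranges to the claimed inequality $\liminf_{x\to\infty}\bar B^r(x)/\bar F(x)\ge 1/(\rho_1 f^*(\gamma))$.

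The main obstacle I anticipate is handling the convolution term $\int_{0^-}^{x}\bar B^r(x-u)\,dF_\gamma(u)$ cleanly: one must show that its dominant asymptotics is governed by $\bar B^r(x)$ (times the total mass of $F_\gamma$, which is $1$) rather than by a middle-range contribution, and this requires the long-tail/subexponential regularity of $\bar B^r$ — but here we only assume $F\in\mathcal L$, not yet $B^r\in\mathcal L$ (that is part of the conclusion). The resolution is to run the argument in the "safe" direction first: use \textbf{H.3} and the integrated-tail definition to transfer long-tailedness of $F$ to a usable lower bound on $\bar B^r$, treat $dF_\gamma$ as a proper probability measure so that $\int_{0^-}^x \bar B^r(x-u)\,dF_\gamma(u)\ge \bar B^r(x)\,F_\gamma(x)\to\bar B^r(x)$, and avoid ever needing an upper bound on the convolution at this stage. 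This keeps the lemma self-contained and defers the finer two-sided control (and the eventual proof that $B^r\in\mathcal L$) to the assembly of Lemmas~\ref{chap2-proba:lemma1 tail asymptotic:limitinf}--\ref{chap2-proba:lemma3 tail asymptotic:limitinf with F longtail} in the proof of Theorem~\ref{chap2-proba:theorem asymptotic equivalence}.
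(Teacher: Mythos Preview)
Your proposal has the direction of the key estimate backwards, and this is not a cosmetic issue. Starting from the identity
\[
\bar F(x)=\rho_1 f^*(\gamma)\bar F_\gamma(x)+\rho_2\bar V^r(x)+\rho_1 f^*(\gamma)\int_{0^-}^{x}\bar B^r(x-u)\,dF_\gamma(u),
\]
and dividing by $\bar F(x)$, you correctly note that the first two terms are $o(1)$. To obtain $1\le \rho_1 f^*(\gamma)\,\liminf \bar B^r(x)/\bar F(x)$ you must bound the convolution term from \emph{above} by something asymptotically equivalent to $\bar B^r(x)$. Your proposed tool, the monotonicity bound $\bar B^r(x-u)\ge \bar B^r(x)$, is a \emph{lower} bound on the integrand: it yields $\int\bar B^r(x-u)\,dF_\gamma(u)\ge \bar B^r(x)F_\gamma(x)$, hence $1\ge \rho_1 f^*(\gamma)\,\bar B^r(x)F_\gamma(x)/\bar F(x)+o(1)$, which gives $\limsup \bar B^r/\bar F\le 1/(\rho_1 f^*(\gamma))$ --- the opposite of what you want (and essentially just a restatement of Lemma~\ref{chap2-proba:lemma1 tail asymptotic:limitinf}). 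Your sentence ``avoid ever needing an upper bound on the convolution at this stage'' is therefore exactly wrong: an upper bound is the entire content of the lemma.

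The missing mechanism is the one the paper uses: split the integral at $\Delta$ and $x-\Delta$; on the near-zero piece bound crudely by $\bar B^r(x-\Delta)F_\gamma(\Delta)$; on the near-$x$ piece bound by $\bar F_\gamma(x-\Delta)-\bar F_\gamma(x)$; and on the middle piece convert $\bar B^r(x-u)$ into a multiple of $\bar F(x-u)$ via the \emph{already proved} bound $\bar B^r/\bar F\le C+\tilde\varepsilon$ from Lemma~\ref{chap2-proba:lemma1 tail asymptotic:limitinf}, and convert $dF_\gamma(u)\le e^{-\gamma\Delta}f^*(\gamma)^{-1}dF(u)$ on $[\Delta,x-\Delta]$. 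The middle piece then becomes a constant times $\bar F(x)^{-1}\int_\Delta^{x-\Delta}\bar F(x-u)\,dF(u)$, and this is precisely where the hypothesis $F\in\mathcal L$ with condition~\eqref{regularity condition smith72} enters, via Smith's Lemma~2.2, to make it $\le\varepsilon$. After taking $\liminf$ and then $\Delta\to\infty$, only the near-zero piece survives and produces $\rho_1 f^*(\gamma)\liminf \bar B^r/\bar F$ on the right of $1\le\cdots$. Without this two-step conversion (Lemma~\ref{chap2-proba:lemma1 tail asymptotic:limitinf} plus Smith's lemma for $F$) there is no way to control the middle range, since you do not yet know that $B^r$ is long-tailed.
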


\begin{appendices}

\renewcommand\thesection{\Alph{section}}

\setcounter{equation}{0}
\section{Proofs}

\renewcommand{\thesection}{A.\arabic{section}}

\section{Proof of Lemma~\ref{chap2-proba:lemmma stability of Yn}}

The proof follows Theorem 4.3.1. of \cite{Moyal2005} and \cite{BaccelliBremaud} (P.74 to 78).
The first part of the proof is devoted to the existence of the stationary solution to Equation~\eqref{chap2-proba:stationary equation Y}. The mapping $x \mapsto \psi(x)$ is non-negative, continuous and non-decreasing. Hence, \eqref{chap2-proba:stationary equation Y} can be solved using the Loyes's construction. For more details about Loynes's technique in the $G/G/1/\infty$ setting, the reader may refer to the Fundamental Result of Stability in \cite{BaccelliBremaud}(p. 71 and paragraph 2 p.74 for the proof). 

\paragraph*{Existence of a stationary solution.}
Let us define the sequence of workload $\left( Y^{0}_{n}, n \geq 0 \right)$ when the initial condition is $Y_{0} = 0$. Define also, the Loynes's sequence $\left( M_n, n \in \mathbb{N} \right)$, associated to $\left( Y^0_n, n \in \mathbb{N} \right)$, by
\begin{equation}
\label{chap2-proba:Definition Loynes's sequence for Y_n}
M_0 = 0, \qquad M_n = Y_n^0 \circ \theta^{-n},
\end{equation}

\n
The shift operator $\theta^{-n}$ means that the time is shifted $n$ times to the left, \textit{i.e.}, the new time origin is the arrival time of customer $C_{-n}$, so that $M_n$ corresponds to the workload finds by the customer $C_0$ when customer $C_{-n}$ finds an empty queue. We can prove, using \eqref{chap2-proba:Definition Loynes's sequence for Y_n}, that $M_n$ satisfies the following recursive relation, for $n \geq 0$,
\begin{equation}
\label{chap2-proba: recursive relation Mn}
M_n = \left[ \max_{1 \leq j \leq n}\left( \sigma_{-j} + D_{-j} + \sum_{i=1}^{j }V_{-i}  - \sum_{i=1}^j \tau_{-i} \right)\right]^+.
\end{equation}

\n Moreover $\left( M_n, n \geq 0 \right)$ satisfies, for all $n \geq 0$
\begin{equation}
\label{chap2-proba:eq-recurence Mn circ theta}
M_{n+1} \circ \theta = \left[ (M_n+V) \vee (\sigma +D +V) - \tau \right]^+.
\end{equation}

\n We prove \eqref{chap2-proba:eq-recurence Mn circ theta} by induction on $n$. For $n=0$, we have
\begin{align*}
M_{1} \circ \theta &= \left[ \max(M_0 + V, \sigma + D +V) - \tau \right]^+ \\
                   &= \left[ \max(V, \sigma + D +V) - \tau \right]^+ \\
                   &= \left[  \sigma + D +V- \tau \right]^+,
\end{align*}
so that
\begin{align*}
M_{1} &= \left( \sigma + D + V - \tau \right)^{+} \circ \theta^{-1}\\
      &= \left( \sigma_{-1} + D_{-1} +V_{-1} - \tau_{-1} \right)^+.
\end{align*}

\n Assume that \eqref{chap2-proba:eq-recurence Mn circ theta} holds for some $n$ and prove that
\begin{equation*}
M_{n+1} \circ \theta = \left[ (M_n+V) \vee (\sigma +D+V) - \tau \right]^+.
\end{equation*}
We have from \eqref{chap2-proba: recursive relation Mn}
\begin{align*}
M_{n+1} &= \left[ \max_{1 \leq j \leq n+1}\left( \sigma_{-j} + D_{-j} + \sum_{i=1}^{j }V_{-i}  - \sum_{i=1}^j \tau_{-i} \right)\right]^+\\
        &= \left[ \max\left\lbrace \sigma_{-1} + D_{-1} +V_{-1}  -\tau_{-1}, \max_{2 \leq j\leq n+1} \left( \sigma_{-j} + D_{-j} + \sum_{i=1}^{j }V_{-i}  - \sum_{i=1}^j \tau_{-i}\right) \right\rbrace \right]^+\\
        &= \left[ \max \left\lbrace \sigma_{-1} + D_{-1} +V_{-1}  -\tau_{-1}, \max_{1 \leq j\leq n} \left( \sigma_{-j-1} + D_{-j-1} + \sum_{i=0}^{j}V_{-i-1}  - \sum_{i=0}^j \tau_{-i-1}\right) \right\rbrace \right]^+\\
        &= \left[ \max \left\lbrace \sigma_{-1} + D_{-1} +V_{-1}  -\tau_{-1}, \max_{1 \leq j\leq n} \left( \sigma_{-j-1} + D_{-j-1} + \sum_{i=1}^{j}V_{-i-1}  - \sum_{i=1}^j \tau_{-i-1} + V_{-1} - \tau_{-1} \right) \right\rbrace \right]^+\\
        &= \left[ \max \left\lbrace \sigma_{-1} + D_{-1} +V_{-1}  -\tau_{-1}, \max_{1 \leq j\leq n} \left( \sigma_{-j-1} + D_{-j-1} + \sum_{i=1}^{j}V_{-i-1}  - \sum_{i=1}^j \tau_{-i-1} \right)+ V_{-1} - \tau_{-1}  \right\rbrace \right]^+\\
        &= \left[ \max \left\lbrace \max_{1 \leq j\leq n} \left( \sigma_{-j} + D_{-j} + \sum_{i=1}^{j}V_{-i}  - \sum_{i=1}^j \tau_{-i} \right)+ V - \tau, (\sigma + D +V)  -\tau \right\rbrace \right]^+ \circ \theta \\
        &=\left[ \max \left(M_n + V,\sigma + D +V  \right) - \tau \right]^+ \circ \theta,
\end{align*}
and thus,
\begin{equation*}
M_{n+1} \circ \theta = \left[ \max \left(M_n + V,\sigma + D +V  \right) - \tau \right]^+ .
\end{equation*}

Next, we will prove that the sequence  $\{ M_n, n \in \mathbb{N} \}$ is non-decreasing. For this, we use the identity $\left[ \max(a,b) \right]^+ = \max(a^+,b)$ and Equation~\eqref{chap2-proba: recursive relation Mn}. For all $n \in \mathbb{N}$, we have
\begin{align*}
M_{n+1} &= \left[ \max_{1 \leq j \leq n+1}\left( \sum_{-j} + D_{-j}+ \sum_{i=1}^{j} V_{-i}- \sum_{i=1}^j \tau_{-i}  \right)\right]^+\\
       &= \left[ \max \left\lbrace \max_{1 \leq j \leq n}\left( \sigma_{-j} + D_{-j} +  \sum_{i=1}^{j} V_{-i} - \sum_{i=1}^j \tau_{-i}  \right), \sigma_{-n-1} + D_{-n-1} + \sum_{i=1}^{n+1} V_{-i} - \sum_{i=1}^{n+1} \tau_{-i} \right\rbrace \right]^+\\
        &=  \max \left\lbrace \left[\max_{1 \leq j \leq n}\left( \sigma_{-j} + D_{-j} + \sum_{i=1}^{j} V_{-i} - \sum_{i=1}^j \tau_{-i} \right)\right]^+, \sigma_{-n-1} + D_{-n-1} +\sum_{i=1}^{n+1} V_{-i} - \sum_{i=1}^{n+1} \tau_{-i} \right\rbrace \\   
        &=  \max \left\lbrace M_n, \sigma_{-n-1} + D_{-n-1}+ \sum_{i=1}^{j} V_{-i} - \sum_{i=1}^{n+1} \tau_{-i} \right\rbrace  \geq M_n.
\end{align*}

\n
The sequence being non-decreasing, there exists a non-negative random variable (possibly infinite) denoted by $M_{\infty}$, such that
\begin{equation}
\label{eq-def-M_infini}
M_{\infty} = \lim_{n \rightarrow \infty}\uparrow M_n = \left[ \sup_{j > 0}\left( \sigma_{-j} + D_{-j} +\sum_{i=1}^{j} V_{-i} - \sum_{i=1}^j \tau_{-i} \right)\right]^+.
\end{equation}

\n
Taking the limit as $n$ goes to $\infty$ in \eqref{chap2-proba:eq-recurence Mn circ theta}, and using the continuity of $\psi$, the limiting value $M_{\infty}$ satisfies
\begin{equation}
\label{limite M-infini}
M_{\infty} \circ \theta = \left[ \max \left( M_{\infty}+V ,\sigma + D + V \right) - \tau \right]^+  .
\end{equation}
Hence, $M_{\infty}$ seems to be a reasonable candidate for the stationary random variable $Y$. It remains to show that $M_{\infty} < \infty$ $\mathbb{P}$-a.s. and $M_{\infty}$ is  $\mathbb{P}$-a.s. unique.

\paragraph*{The random variable $\boldsymbol{M_{\infty}}$ is $\boldsymbol{\mathbb{P}}$-a.s. finite.} 

To prove this property, we show that the event $\{ M_{\infty} = \infty \}$ is $\theta$-invariant, \textit{i.e.},  $\theta^{-1}\{ M_{\infty} = \infty \} =  \{ M_{\infty} = \infty \}$. Given the ergodicity of $(\mathbb{P}, \theta)$, we will have $\mathbb{P}(M_{\infty} = \infty ) = 0$ or $1$. It will remain only to prove that $\mathbb{P}(M_{\infty} = \infty ) = 0$ using the Birkhoff's theorem.

\n\\
The event $\{ M_{\infty} = \infty \}$ is $\theta$-invariant. Indeed,
\begin{align*}
\theta^{-1}\{ M_{\infty} = \infty \} &= \{\omega \mid \theta\omega \in \{ M_{\infty} = \infty \} \}\\
              &= \{\omega \mid  M_{\infty}(\theta\omega) = \infty  \}\\
              &= \{\omega \mid  M_{\infty}\circ \theta(\omega) = \infty  \}\\
              &= \{ M_{\infty}\circ \theta = \infty  \}\\
              &=  \left\lbrace \left[ \max( M_{\infty}+V, \sigma + D + V) - \tau  \right]^+ = \infty \right\rbrace\\
              &= \{ M_{\infty} = \infty \},
\end{align*}

since the random variables $\tau$, $\sigma$, $V$ and $D$ are $\mathbb{P}$-integrable. 
Therefore, in view of the ergodic assumption of $(\mathbb{P}, \theta)$ we have $\mathbb{P}( M_{\infty} = \infty) = 0$ or $1$. 
Noting that
\begin{equation}
\label{telescopic sum for sigma}
\sum_{i=1}^{j} \left( \sigma_{-i} - \sigma_{-i+1} \right)+ \sigma_{0} = \sigma_{-j},
\end{equation}
and
\begin{equation}
\label{telescopic sum for D}
\sum_{i=1}^{j} \left( D_{-i} - D_{-i+1} \right) \sigma_{0} = \sigma_{-j}.
\end{equation}

\n We have, by substituting the values of $\sigma_{-j}$ and $D_{-j}$ by \eqref{telescopic sum for sigma} and \eqref{telescopic sum for D}, respectively, in Equation~\eqref{eq-def-M_infini},
\begin{equation}
\label{eq-M_infini modifiee}
M_{\infty} = \left[ \sup_{j > 0}\left( \sum_{i=1}^j (\sigma_{-i}- \sigma_{-i+1}+ D_{-i} - D_{-i+1} + V_{-i} - \tau_{-i})+ \sigma_{0} + D_{0} \right)\right]^+.
\end{equation}

\n To prove that $M_{\infty} < \infty$ $\mathbb{P}$-a.s., suppose that $M_{\infty} = \infty$ $\mathbb{P}$-a.s., thus from \eqref{eq-M_infini modifiee}, we have
\begin{equation*}
\sup_{j > 0}\left( \sum_{i=1}^j (\sigma_{-i}- \sigma_{-i+1}+ D_{-i} - D_{-i+1} + V_{-i} - \tau_{-i})+ \sigma_{0} + D_{0} \right)= \infty, \qquad \mathbb{P}-a.s.,
\end{equation*}

\n hence
\begin{equation}
\label{sum telescopic = + infty}
\sup_{j > 0}\left( \sum_{i=1}^j (\sigma_{-i}- \sigma_{-i+1}+ D_{-i} - D_{-i+1} + V_{-i} - \tau_{-i}) \right) = \infty,
\end{equation}
since $\sigma$ and $D$ are $\mathbb{P}$-a.s. finite. Moreover, the Birkhoff's theorem leads to
\begin{equation*}
\lim_{j \rightarrow \infty}  \frac{1}{j} \sum_{i=1}^j (\sigma_{-i}- \sigma_{-i+1}+ D_{-i} - D_{-i+1} + V_{-i} - \tau_{-i}) = \mathbb{E}^0(\sigma - \sigma + D - D + V - \tau ) < 0 \qquad \mathbb{P}-a.s.,
\end{equation*}
since we have assumed that $\mathbb{E}\left( V-\tau\right)< 0$. Therefore
\begin{equation*}
\lim_{j \rightarrow \infty}\sum_{i=1}^j (\sigma_{-i}- \sigma_{-i+1}+ D_{-i} - D_{-i+1} + V_{-i} - \tau_{-i}) = -\infty \qquad \mathbb{P}-a.s,
\end{equation*}
and thus
\begin{equation*}
\sup_{j >0}\sum_{i=1}^j (\sigma_{-i}- \sigma_{-i+1}+ D_{-i} - D_{-i+1} + V_{-i} - \tau_{-i}) = -\infty \qquad \mathbb{P}-a.s,
\end{equation*}
which contradicts \eqref{sum telescopic = + infty}. Thus $M_{\infty} < \infty$ $\mathbb{P}$-a.s..

\paragraph*{The random variable $\boldsymbol{M_{\infty}}$ is unique.}

We have to prove firstly that $\mathbb{P}\left( Y \leq (\sigma + D + V )\right) > 0$, where $Y$ is given by Equation~\eqref{eq-def-M_infini}. We consider two cases. \\

$\bullet$ If $\mathbb{P}(Y = 0) > 0$, we then have $0 < \mathbb{P}(Y =0 ) \leq \mathbb{P}( Y \leq \sigma + D + V)$ since the random variables $\sigma$, $D$ and $V$ are positive. \\

$\bullet$ If $\mathbb{P}(Y = 0) = 0$, i.e. $Y > 0$ $\mathbb{P}$-a.s.. In this case, we assume that $\mathbb{P} \left( Y \leq \sigma + D + V \right) = 0$, hence $Y  > \sigma + D + V $, $\mathbb{P}$-a.s.. Using \eqref{limite M-infini} we have
\begin{equation*}
  Y  \circ \theta = \left[ (Y+V) \vee \left(\sigma + D + V \right) - \tau \right] = \left[ Y + V - \tau \right]^{+} = Y + V - \tau.
\end{equation*}

\n
The last equality holds since the sequence $\{ M_n\}$ is non-decreasing, as a consequence by using the assumption we have $ 0< Y \leq  Y\circ \theta $,  $\mathbb{P}$-a.s., hence $Y \circ \theta  -  Y = V - \tau$. Therefore, $\mathbb{E}(Y \circ \theta - Y) = \mathbb{E}(V- \tau) < 0$, which contradicts the Ergodic Lemma: $\mathbb{E}(Y \circ \theta - Y) = 0$, thus we have $\mathbb{P}\left( Y \leq \sigma +D + V )\right) > 0$.  \vspace{0.3cm}

We then have to show that $M_{\infty}$ is the minimal non-negative solution of \eqref{limite M-infini}, \textit{i.e.}, $M_{\infty} \leq U$, $\mathbb{P}$-a.s. for some non-negative solution of \eqref{limite M-infini}. We shall prove this property by induction on $n$. Let $U$ be a finite non-negative solution of \eqref{limite M-infini}, then $U$ satisfies 
\begin{equation*}
U \geq 0, \qquad U \circ \theta = (U+V) \vee \left(\sigma + D + V \right) - \tau , \quad \mbox{and} \quad \mathbb{P} \left(U \leq \sigma + D + V \right) > 0.
\end{equation*}

\n
For $n=0$, this property is obvious since $U \geq 0 = M_0$. Suppose that, $\mathbb{P}$-a.s. $U \geq M_n$ holds for some $n > 0$ , we obtain using the inductive assumption
\begin{equation*}
 U \circ \theta = \left[(U+V) \vee \left(\sigma + D+ V \right) - \tau \right]^+ \geq \left[(M_n+V) \vee \left(\sigma + D + V \right) - \tau \right]^+ = M_{n+1} \circ \theta, \quad \mathbb{P}- \mbox{a.s.}
\end{equation*}

\n hence $U \geq M_{n+1}$ $\mathbb{P}$-a.s.. From monotone convergence theorem, we have 
\begin{equation}
\label{eq-minimility of M_inf}
U \geq M_{\infty}, \quad \mathbb{P}-\mbox{a.s.},
\end{equation}

\n thus $M_{\infty}$ is the minimal solution of \eqref{limite M-infini}.  Therefore, it suffices to show that $\mathbb{P}(U \leq M_{\infty}) = 1$ to obtain $\mathbb{P}(U = M_{\infty} ) = 1$. The event $\{ U \leq M_{\infty}  \}$ is $\theta$-contracting ($A$ is $\theta$-contracting if $A \subset \theta^{-1}A$). Indeed, on $\{ U \leq M_{\infty}  \}$, we have
\[ U \circ \theta = \left[ (U+V) \vee \sigma + D+V \right]^+  \leq  \left[ (M_{\infty}+V) \vee (\sigma + D+V\right]^+ = M_{\infty} \circ \theta,\qquad \mathbb{P}-\mbox{a.s.},\]
i.e. $\{U \leq M_{\infty}  \} \subset \theta^{-1}\{U \leq M_{\infty}  \} = \{ U \circ \theta \leq   M_{\infty} \circ \theta \}$. In view of the ergodic Lemma and Remark 2.3.1 p.77 in \cite{BaccelliBremaud}, the event $\{Z \leq M_{\infty}  \}$ is $\theta$-invariant, \textit{i.e.},
\[ \{U \leq M_{\infty}  \}  = \{ U \circ \theta \leq   M_{\infty} \circ \theta \}. \]
Since $(\mathbb{P}, \theta)$ is ergodic $\mathbb{P}(U \leq M_{\infty} )=0$ or $1$. Furthermore, on $\{ U \leq M_{\infty}  \}$, we have

\begin{align*}
\begin{split}
U \circ \theta &= \left[ (U+V) \vee (\sigma + D + V) - \tau  \right]^+\\
               &= \left[ U+V  - \tau  \right]^+ \mathbf{1}_{\{ U+V > \sigma + D + V \}} + \left[ \sigma + D + V - \tau  \right]^+ \mathbf{1}_{ \{ U+V \leq \sigma + D + V \}}\\
               &\leq \left[ M_{\infty}+V  - \tau  \right]^+ \mathbf{1}_{\{ U > \sigma + D + V \}} + \left[ \sigma + D + V - \tau  \right]^+  \mathbf{1}_{ \{ U+V \leq \sigma + D + V \}} \\
               &  \leq \left(M_{\infty} \circ \theta\right) \mathbf{1}_{\{ U > \sigma + D + V \}}  + \left(M_{\infty} \circ \theta \right) \mathbf{1}_{ \{ U \leq \sigma + D + V \}} = M_{\infty} \circ \theta
\end{split}               
\end{align*}
where the last inequality holds because of $M_{\infty} \leq (M_{\infty}+V) \vee (\sigma + D + V )$ and $\left(\sigma + D + V \right) \leq (M_{\infty} +V) \vee (\sigma + \tilde{D} + V ) $. From the last inequality, and the fact that the event $\{ U \leq M_{\infty} \}$ is $\theta$-invariant, we have
\begin{equation*}
\{ U \leq (\sigma + D + V ) \} \subset \{ U \circ \theta  \leq  M_{\infty} \circ \theta\} = \{ Z \leq  M_{\infty} \},
\end{equation*}

\n
which yields to 
\begin{equation}
0 <  \mathbb{P}(U \leq (\sigma + D + V )) \leq \mathbb{P}( Z \leq  M_{\infty}),
\end{equation}
since $\mathbb{P}\left(Z \leq \sigma + D + V \right) > 0$. Moreover, the ergodicity of $(\mathbb{P}, \theta)$ implies that $\mathbb{P}( U \leq  M_{\infty}) > 0$. Thus with \eqref{eq-minimility of M_inf}, we have
\begin{equation*}
 \mathbb{P}(Z = M_{\infty}) = 1. 
\end{equation*}

\section{Proof of Lemma~\ref{chap2-proba:lemma:comparaison Z_n < Y_n}}

We prove only that $W_{1} \leq Y_{1}$. The proof is the same for $Z_n \leq Y_n$. We have $X = W+ \sigma\mathbf{1}_{W \geq D} - \tau > 0$. \\

$\bullet$ If $W \leq D$, then $Z_1 = W+ \sigma - \tau$ and 
\begin{equation*}
0 < Z_1 = W+ \sigma - \tau \leq D + \sigma - \tau \leq D + V+ \sigma - \tau \leq \max(Y+V,\sigma+V+D)-\tau
= Y_1.
\end{equation*}

$\bullet$ If $W \geq D$ then $Z_1 = W - \tau > 0$.
\begin{equation*}
0< Z_1 = W- \tau \leq Y-\tau \leq Y+V-\tau \leq \max(Y+V,\sigma+V+D)-\tau = Y_1.
\end{equation*}

$\bullet$ If $X \leq  0$ and $W \leq D$ then $Z_1 = W + \sigma + V -\tau$.
\begin{equation*}
Z_1 = W + \sigma + V -\tau \leq D \sigma + V -\tau \leq \max(Y+V,\sigma+V+D)-\tau = Y_1.
\end{equation*}

\section{Proof of Theorem~\ref{chap2-proba:theorem:stability of Z}}

\textit{Existence.} According to Lemma~\ref{chap2-proba:lemma:comparaison Z_n < Y_n}, we have
\begin{equation*}
 Z_{n}^{W} \leq  Y_{n}^{Y}, \quad \mathbb{P}-a.s.,
\end{equation*}

\n Assuming that $W \leq Y $ $\mathbb{P}$-a.s., we have

\begin{equation*}
Z_{1}^{W}\leq  Y \circ \theta,
\end{equation*}

\n hence by induction 
\begin{equation}
\label{majoration Zn < Y o theta}
Z_{n}^{W} \leq  Y \circ \theta^{n}, \quad n \geq 0.
\end{equation}

\n Define the event 
\begin{equation*}
A_n = \{ Y \circ \theta^n = 0 \},\quad \mbox{for all} \; n \in \mathbb{N}.
\end{equation*}

\n
The sequence $\{ A_n, n \in \mathbb{N} \}$ is a stationary sequence. Moreover it is a sequence of renovating events of length $1$ for the SRS $\{ Y_n^Y, n \in \mathbb{N} \}$ since the workload at the arrival time of the customer $C_{n+1}$ only depends on $\xi = (\sigma_n, D_n , V_n ,\tau_n)$ on $A_{n}$, i.e. $ Y_{n+1}^{Y} = \psi(0,\xi) = \left[ (0+V_n) \vee (\sigma_{n}+D_n +V_n) - \tau_{n} \right]^{+}$ on $A_n$. Moreover, on the event $A_n$, we have $Z_{n}^{W} = 0$. Therefore the sequence $\{ A_n, n \in \mathbb{N} \}$ is also a sequence of renovating event of length $1$ for the SRS $\{ Z_n^{W}, n \in  \mathbb{N} \}$, because $Z_{n+1}^{W}$ only depends on the driving sequence $\xi_n$. Indeed, on $A_n$, since $Z^{W}_{n} = 0$, then $Z^{W}_{n+1} = \varphi(0,\xi)$. According to the definition of $Y$ given by \eqref{chap2-proba:stationary solution for Yn}, Equation~\eqref{stability condition} is equivalent to $\mathbb{P}(Y = 0) > 0$. But the last event is exactly $A_0$, thus $\mathbb{P}(A_{0}) > 0$. As a consequence, we may apply Theorem 1 in \cite{Borovkov78} (or $(4.3.15)$ p. 101 in \cite{BaccelliBremaud}) and conclude that there exists a stationary sequence $\{ U \circ \theta^{n} \}$, solution of \eqref{chap2-proba:stationary equation Z}, and such that, for any initial condition $W$, the sequence $\{ W_n^{W} \}$ converges with strong coupling to $\{ U \circ \theta^{n} \}$.
\vspace{0.3cm}

\textit{Uniqueness.} Let $X$ and $X'$ be two finite and positive solutions of \eqref{chap2-proba:stationary equation Z}. We prove that $Z^{X}_{n}$ and $Z^{X'}_{n}$ admit the same sequence of renovating events. First, let us prove that if $X$ is a finite stationary solution of \eqref{chap2-proba:stationary equation Z} then $X \leq Y$ a.s.. To this end, we prove that the event $\{ X\leq Y \}$ is $\theta$-contracting, \textit{i.e.},  $\{ X \leq Y \} \subset  \theta^{-1}\{ X \leq Y \} = \{  X \circ \theta \leq Y \circ \theta \}$ and has a positive probability. 
On the event $\{ X \leq Y \}$, we have
\begin{equation*}
 X \circ \theta = \varphi(X, \xi) \leq \left[ (Y+V) \vee (\sigma + D + V) - \tau \right]^{+} = Y\circ \theta,
\end{equation*}

\n
which completes the proof of the $\theta$-contracting. In view of the ergodic Lemma (Lemma 2.3.1 p. 77 in \cite{BaccelliBremaud}) together with Remark 2.3.1 p.77, the event $\{ X\leq Y \}$ is $\theta$-invariant, \textit{i.e.}, 
\begin{equation*}
\{ X\leq Y \} = \{ X \circ \theta \leq Y \circ \theta \}.
\end{equation*}

\n
Since $(P^{0}, \theta)$ is ergodic, $\mathbb{P}( X\leq Y ) = 0$ or $1$. For proving that $\mathbb{P}( X\leq Y ) = 1$, it is enough to show that $\mathbb{P}( X\leq Y ) >0$. First, we show that $\mathbb{P}(X < D) >0$. Assuming that $\mathbb{P}(X < D) = 0$, \textit{ i.e.}, $X \geq D$ $\mathbb{P}$-a.s., we have
\begin{equation*}
X\circ \theta - X =  -\tau \mbox{ if } X > 0.     
\end{equation*}

\n
Since $\mathbb{E}^{0}(-\tau) < 0$, then $\mathbb{E}\left[ X \circ \theta - \theta  \right] \neq 0$ which contradicts the Ergodic Lemma. On the event $\{ X \leq D \}$, we have
\begin{align*}
 X \circ \theta  &\leq \varphi(D,\xi) \leq \left[(Y+V) \vee (\sigma + D +V)  \right]^+ = Y \circ \theta.
\end{align*} 

\n
Hence 
\begin{equation*}
0 < \mathbb{P}(X < D ) \leq \mathbb{P}(X \circ \theta \leq Y \circ \theta) = \mathbb{P}(X \leq Y).
\end{equation*}

\n We have also
\begin{equation}
Z_n^{X} = X \circ \theta \leq Y\circ \theta^{n}, \quad n \geq 0.
\end{equation}

\n
Therefore, for any finite solution $X$ of \eqref{chap2-proba:stationary equation Z}, the sequence $\{ Z{n}^{X} = X \circ \theta \}$ admits $A_n = \{Y \circ \theta^{n} \}$ as a stationary sequence of renovating events of length $1$. In view of (4.3.6) p.99 in \cite{BaccelliBremaud},
\begin{equation*}
\lim_{n \rightarrow \infty} Z_{n}^{X} \circ \theta^{-n} = X.
\end{equation*}

\n
Hence, if $X'$ is another finite solution of \eqref{chap2-proba:stationary equation Z}, we obtain that the sequence $\{ Z_{n}^{X'} = X' \circ \theta \}$ admits the same stationary sequence of renovating events as $\{ Z_{n}^{X}\}$. Thus,
\begin{equation*}
X = \lim_{n \rightarrow \infty} Z_{n}^{X}\circ \theta^{-n} = \lim_{n \rightarrow \infty} Z_{n}^{X'}\circ \theta^{-n} = X'.
\end{equation*}

\section{Proof of Proposition~\ref{stationary integral equation}}

\begin{proof}

Equation~\eqref{I(x)} yields

\begin{align*}
\begin{split}
I(x) & = \int_{0}^{\infty}dA(t)\int_{0}^{x+t}dF_n(u)\bar{G}(u) \left[B(x+t-u) - B(t-u) \right]\\
     & = \int_{0}^{\infty}dA(t)\int_{0}^{x+t}dF_n(u)\bar{G}(u)B(x+t-u) \\
     & \qquad -  \int_{0}^{\infty}dA(t)\int_{0}^{x+t}dF_n(u)\bar{G}(u)B(t-u)\\
     &=: I_1(x) + I_2(x).
\end{split}
\end{align*}

\n
The classical result of queueing with impatience (see for example \cite{Baccelli84}) gives

\begin{align*}
I_1 &= \int_{z=x}^{\infty} \int_{u=0^-}^{z} \bar{G}(u)B(z-u)f_n(u)\lambda e^{-\lambda(z-x)} du dz\\
    &=P_n(0) \int_{z=x}^{\infty} B(z)\lambda e^{-\lambda(z-u)}dz + \int_{z=x}^{\infty} \int_{u=0^+}^{z} \bar{G}(u)B(z-u)f_n(u)\lambda e^{-\lambda(z-x)} du dz.\\
\end{align*}

\n
Differentiating with respect to $x$ yields

\begin{align}
\begin{split}
I_1'(x) &= \lambda P_n(0) \int_{z=x}^{\infty} B(z)\lambda e^{-\lambda(z-x)}dz + \lambda \int_{z=x}^{\infty} \int_{u=0^+}^{z} \bar{G}(u)B(z-u)f_n(u)\lambda e^{-\lambda(z-x)} du dz \\
   & \qquad - \lambda P_n(0) B(x) - \lambda\int_{u=0}^{x}\bar{G}(u)B(x-u)f_n(u)du. 
\end{split}
\end{align}

\begin{align}
\begin{split}
I_2'(x) &= \lambda P_n(0) \int_{z=x}^{\infty}B(z-x)\lambda e^{-\lambda(z-x)} dz + \lambda \int_{z=x}^{\infty}\int_{u=0}^{z-x}\bar{G}(u)B(z-x-u)f_n(u)\lambda e^{-\lambda(z-x)} du dz \\
 & \qquad - P_n(0)\int_{z=x}^{\infty}f_{B}(z-x)\lambda e^{-\lambda(z-x)} dz - \int_{u=0}^{z-x}\bar{G}(u)f_{B}(z-x-u)f_n(u)\lambda e^{-\lambda(z-x)} du dz.
\end{split}
\end{align}

\n
Equation~\eqref{J(x)} yields

\begin{align*}
J(x) &=  \int_{0}^{\infty} dA(t)\int_{0}^{x+t} dF_{n}(u) G(u) - \int_{0}^{\infty} dA(t)\int_{0}^{t} dF_{n}(u) G(u)\\
     &= \int_{z=x}^{\infty}\int_{u=0}^{z} G(u)f_n(u)\lambda e^{-\lambda(z-x)}  du dz - \int_{z=x}^{\infty}\int_{u=0}^{z-x} G(u)f_n(u)\lambda e^{-\lambda(z-x)}  du dz \\
     &=: J_1(x) + J_{2}(x).
\end{align*}

\n
Differentiating with respect to $x$ the above equation, we get 

\begin{align}
J_{1}'(x) = \lambda \int_{z=x}^{\infty}\int_{u=0}^{z} G(u)f_n(u)\lambda e^{-\lambda(z-x)}  du dz - \lambda \int_{0}^{x}G(u)f_n(u) du.
\end{align}

\begin{align}
J_{2}'(x) = \lambda \int_{z=x}^{\infty}\int_{u=0}^{z-x} G(u)f_n(u)\lambda e^{-\lambda(z-x)}  du dz - \int_{z=x}^{\infty}G(z-x)f_n(z-x)\lambda e^{-\lambda(z-x)}  dz.
\end{align}

\n
Equation~\eqref{K(x)} can then  be rewritten

\begin{align*}
\begin{split}
K(x)  & = P_n(0)\int_{z=x}^{\infty}\int_{s=0}^{z-x} V(z-s)f_{B}(s) \lambda e^{-\lambda(z-x)} ds dz \\
      & \qquad + \int_{z=x}^{\infty}\int_{u=0}^{z-x}\int_{0}^{z-x-u}\bar{G}(u)V(z-u-s)f_{B}(s)f_n(u)\lambda e^{-\lambda(z-x)}  ds du dz\\
      &=: K_{1}(x) + K_{2}(x).
\end{split}
\end{align*}

\n \\
The derivative of $K_{1}$ and $K_{2}$ are given by

\begin{align}
\begin{split}
 K_{1}'(x) &= \lambda  P_n(0)\int_{z=x}^{\infty}\int_{s=0}^{z-x} V(z-s)f_{B}(s) \lambda e^{-\lambda(z-x)} ds dz \\
 & \qquad - P_n(0)\int_{z=x}^{\infty} f_{B}(z-x) \lambda e^{-\lambda(z-x)} dz.
\end{split}
\end{align}

\begin{align}
\begin{split}
K_{2}'(x) &= \lambda\int_{z=x}^{\infty}\int_{u=0}^{z-x}\int_{0}^{z-x-u}\bar{G}(u)V(z-u-s)f_{B}(s)f_n(u)\lambda e^{-\lambda(z-x)}  ds du dz\\
         & \qquad - V(x)\int_{z=x}^{\infty}\int_{u=0}^{z-x}\bar{G}(u)f_{B}(z-x-u)f_n(u)\lambda e^{-\lambda(z-x)} du dz.
\end{split}
\end{align}

\n
The last equation, \eqref{L(x)} yields

\begin{equation*}
L(x)  = \int_{z=x}^{\infty}\int_{u=0}^{z-x}G(u)V(z-x)f_n(u)\lambda e^{-\lambda(z-x)}du dz.
\end{equation*}

\n
The derivative is

\begin{align}
\begin{split}
L'(x) &= \lambda \int_{z=x}^{\infty}\int_{u=0}^{z-x}G(u)V(z-x)f_n(u)\lambda e^{-\lambda(z-x)}du dz \\
      & \qquad - V(x)\int_{x}^{\infty}G(z-x)f_n(z-x)\lambda e^{-\lambda(z-x)}dz.
\end{split}
\end{align}

\n \\
For $x >0$, differentiating both sides of Equation~\eqref{integral equation F(n+1)} whenever it exists, yields

\begin{align}
\label{eq f_n+1}
\begin{split}
f_{n+1}(x) & = I_1'(x) + I_2'(x) + J_1'(x) + J_{2}'(x)+K_{1}'(x)  + K_{2}'(x)+L'(x) \\
  & = \lambda  F_{n+1}(x) -\lambda P_n(0)B(x) - \lambda\int_{u=0}^{x} \bar{G}(u)B(x-u)f_n(u) du - \lambda\int_{u=0}^{x}G(u)f_n(u) du\\
  & \qquad  + P_n(0) \left[ 1-V(x) \right]\int_{z=x}^{\infty}f_{B}(z-x)\lambda e^{-\lambda(z-x)}dz \\
  & \qquad + \left[ 1-V(x) \right] \int_{z=x}^{\infty}\int_{u=0}^{z-x} \bar{G}(u)f_{B}(z-x-u)f_n(u)\lambda e^{-\lambda(z-x)}du dz\\
  & \qquad +  \left[ 1-V(x) \right] \int_{z=x}^{\infty} G(z-x)f_n(z-x) \lambda e^{-\lambda(z-x)}dz.
\end{split}           
\end{align}

\n
By definition,

\begin{equation*}
  F_{n+1}(x) = P_{n+1}(0) + \int_{u=0}^{x}f_{n+1}(u) du, \quad x > 0.
\end{equation*}

\n
Substituting the previous equality into~\eqref{eq f_n+1} and letting $n \rightarrow \infty$ gives the desired equation for the steady state pdf.

\end{proof}

\section{Proof of Lemma~\ref{chap2-proba:lemma1 tail asymptotic:limitinf}}

\begin{proof}
Using Equation~\eqref{integral equation of F-bar wrt dF_gamma}, we have for all $x \geq 0$
\begin{equation*}
\bar{F}(x) \geq  \rho_{2}\bar{V}^{r}(x) + \rho_{1}f^{*}(\gamma)\bar{F_{\gamma}}(x) + \rho_{1}f^{*}(\gamma)\bar{B}^{r}(x)F_{\gamma}(x).
\end{equation*}

\n Hence
\begin{equation*}
\dfrac{\bar{F}(x)}{\bar{B}^{r}(x)} \geq  \rho_{2}\dfrac{\bar{V}^{r}(x)}{\bar{B}^{r}(x)} + \rho_{1}f^{*}(\gamma)\dfrac{\bar{F_{\gamma}}(x)}{\bar{B}^{r}(x)} +\rho_{1}f^{*}(\gamma)F_{\gamma}(x).
\end{equation*}

\n\\ Since $\dfrac{\bar{F}_{\gamma}(x)}{\bar{B}^{r}(x)} \geq 0$ for all $x$, using assumption \textbf{H.1} and the fact that $F_{\gamma}$ is a distribution function, we have
\begin{equation*}
\liminf_{x \to \infty} \dfrac{\bar{F}(x)}{\bar{B}^{r}(x)} \geq \rho_{1}f^{*}(\gamma).
\end{equation*}
\end{proof}

\section{Proof of Lemma~\ref{chap2-proba:lemma2 tail asymptotic:limitsup}}

\begin{proof}

Let be $\varepsilon > 0$. Suppose $B^r \in \mathcal{L}$ holds and fulfilles condition~\eqref{regularity condition smith72}. According to Lemma 2.2 in \cite{Smith1972}, we can find $\Delta$ sufficiently large, such that
\begin{equation}
\label{application of lemma 22 smith72 case B^r longtail}
\underset{x \rightarrow \infty}{\limsup}\, \dfrac{1}{\bar{B}^r(x)}\int_{\Delta}^{x-\Delta} \bar{B}^r(x-u)\, dB^r(u) \leq \varepsilon.
\end{equation}

\n We fix $\Delta > 0$ chosen previously. We have from \eqref{integral equation of F-bar wrt dB^r}, for $x \geq 0$,
\begin{align*}
\label{F-bar splitted}
\begin{split}
\bar{F}(x) &= \rho_{1}f^*(\gamma) \bar{B}^{r}(x) + \rho_{2}\bar{V}^{r}(x)+\rho_{1}f^{*}(\gamma)\bar{F}_{\gamma}(x) \\
& \quad  + \rho_{1}f^{*}(\gamma)\left\lbrace \int_{0}^{\Delta}\bar{F}(x-u)dB^r(u) 
 + \int_{\Delta}^{x-\Delta}\bar{F}(x-u)dB^r(u) + \int_{x-\Delta}^{x}\bar{F}(x-u)dB^r(u)\right\rbrace,
\end{split}
\end{align*}

\n hence, for $\quad x \geq 0$,

\begin{align*}
\begin{split}
\bar{F}(x) &\leq \rho_{1}\left[ F(0) + f^{*}(\gamma) \right]\bar{B}^{r}(x) + \rho_{2}\bar{V}^{r}(x) + \rho_{1}f^{*}(\gamma) \bar{F}_{\gamma}(x-\Delta)B^{r}(\Delta)+\bar{F}_{\gamma}(0)\left[ \bar{B}^{r}(x-\Delta) - \bar{B}^{r}(x) \right]\\
& \qquad + \rho_{1}f^{*}(\gamma)\int_{\Delta}^{x-\Delta}\bar{F}_{\gamma}(x-u)dB^{r}(u).
\end{split}
\end{align*}

\n We now divide the above equation by $\bar{B}^r(x)$
\begin{align*}
\begin{split}
\dfrac{\bar{F}(x)}{\bar{B}^{r}(x)} & \leq \rho_{1}f^{*}(\gamma) + \rho_{2}\dfrac{\bar{V}^{r}(x)}{\bar{B}^{r}(x)} + \rho_{1}f^{*}(\gamma)\dfrac{\bar{F}_{\gamma}(x-\Delta)}{\bar{B}^{r}(x)} B^{r}(\Delta) + \dfrac{\rho_1 f^*(\gamma)}{\bar{B}^{r}(x)}\left[ \bar{B}^{r}(x-\Delta) - \bar{B}^{r}(x) \right]\\
& \qquad + \rho_{1}f^{*}(\gamma)\dfrac{1}{\bar{B}^{r}(x)}\int_{\Delta}^{x-\Delta}\bar{F}_{\gamma}(x-u)dB^{r}(u) \\
   & \leq \rho_{1}f^{*}(\gamma)  + \rho_{2}\dfrac{\bar{V}^{r}(x)}{\bar{B}^{r}(x)} + \rho_{1}f^{*}(\gamma)\dfrac{\bar{F}_{\gamma}(x-\Delta)}{\bar{B}^{r}(x-\Delta)}\dfrac{\bar{B}^{r}(x-\Delta)}{\bar{B}^{r}(x)} B^{r}(\Delta) \\
   & \qquad +  \dfrac{\rho_1 f^*(\gamma)}{\bar{B}^{r}(x)}\left[ \bar{B}^{r}(x-\Delta) - \bar{B}^{r}(x) \right] + \rho_{1}f^{*}(\gamma)\int_{\Delta}^{x-\Delta}\dfrac{\bar{F}_{\gamma}(x-u)}{\bar{B}^{r}(x-\Delta)}\dfrac{\bar{B}^{r}(x-\Delta)}{\bar{B}^{r}(x)}dB^{r}(u).
\end{split}
\end{align*}

\n Besides, we have
\begin{equation*}
\dfrac{\bar{F}_{\gamma}(x-\Delta)}{\bar{B}^{r}(x-\Delta)} \leq \dfrac{e^{-\gamma(x-\Delta)}}{f^{*}(\gamma)}\dfrac{\bar{F}(x-\Delta)}{\bar{B}^{r}(x-\Delta)},
\end{equation*}

\n and
\begin{align*}
\int_{\Delta}^{x-\Delta}\dfrac{\bar{F}_{\gamma}(x-u)}{\bar{B}^{r}(x-\Delta)}\dfrac{\bar{B}^{r}(x-\Delta)}{\bar{B}^{r}(x)}dB^{r}(u) &\leq \dfrac{e^{-\gamma \Delta}}{f^{*}(\gamma)}\int_{\Delta}^{x-\Delta}\dfrac{\bar{F}(x-u)}{\bar{B}^{r}(x-\Delta)}\dfrac{\bar{B}^{r}(x-\Delta)}{\bar{B}^{r}(x)}dB^{r}(u)\\
   &  \leq \dfrac{e^{-\gamma \Delta}}{f^{*}(\gamma)} M \int_{\Delta}^{x-\Delta}\dfrac{\bar{B}^{r}(x-\Delta)}{\bar{B}^{r}(x)}dB^{r}(u).
\end{align*}

\n As a consequence
\begin{align*}
\begin{split}
\dfrac{\bar{F}(x)}{\bar{B}^{r}(x)} & \leq \rho_{1}f^{*}(\gamma) + \rho_{2}\dfrac{\bar{V}^{r}(x)}{\bar{B}^{r}(x)} + \rho_{1}e^{-\gamma(x-\Delta)}\dfrac{\bar{F}(x-\Delta)}{\bar{B}^{r}(x-\Delta)}\dfrac{\bar{B}^{r}(x-\Delta)}{\bar{B}^{r}(x)} B^{r}(\Delta) \\
   & \qquad +  \dfrac{\rho_1 f^*(\gamma)}{\bar{B}^{r}(x)}\left[ \bar{B}^{r}(x-\Delta) - \bar{B}^{r}(x) \right] + \rho_{1}e^{-\gamma \Delta} M \int_{\Delta}^{x-\Delta}\dfrac{\bar{B}^{r}(x-\Delta)}{\bar{B}^{r}(x)}dB^{r}(u).
\end{split}
\end{align*}

Using the fact that $B^r(x-\Delta) \sim B^r(x)$, as $x \rightarrow \infty$, and $\underset{x \rightarrow \infty}{\limsup}\dfrac{\bar{F}(x)}{\bar{B}^r(x)} < \infty$ (by \textbf{H.3}), with equation~\eqref{application of lemma 22 smith72 case B^r longtail} and assumption \textbf{H.1}, we find
\begin{equation*}
\underset{x \to \infty}{\limsup} \dfrac{\bar{F}(x)}{\bar{B}^{r}(x)} \leq \rho_{1}f^{*}(\gamma) +  \rho_{1}e^{-\gamma \Delta} M \varepsilon.
\end{equation*}

\n Letting $\Delta \to \infty$, we get
\begin{equation*}
\underset{x \to \infty}{\limsup} \dfrac{\bar{F}(x)}{\bar{B}^{r}(x)}  \leq \rho_{1}f^{*}(\gamma).
\end{equation*}
\end{proof}

\section{Proof of Lemma~\ref{chap2-proba:lemma3 tail asymptotic:limitinf with F longtail}}

\begin{proof}

Let be $\varepsilon > 0$. Suppose $F \in \mathcal{L}$ holds and fullfiles condition~\eqref{regularity condition smith72}. According to Lemma 2.2 in \cite{Smith1972}, we can find a large $\Delta(\varepsilon)$ such that
\begin{equation}
\label{application of lemma 22 smith72 case F longtail}
\underset{x \rightarrow \infty}{\limsup}\, \dfrac{1}{\bar{F}(x)}\int_{\Delta(\varepsilon)}^{x-\Delta(\varepsilon)} \bar{F}(x-u)\, dF(u) \leq \varepsilon.
\end{equation}

From Lemma~\ref{chap2-proba:lemma1 tail asymptotic:limitinf}, there exist a small $\tilde{\varepsilon} > 0$ and $\Delta(\tilde{\varepsilon} )$ such that
\begin{equation}
\label{upper bound for the inverse B/F}
\dfrac{\bar{B}^r(x)}{\bar{F}(x)} \leq \rho_1 f^*(\gamma) + \tilde{\varepsilon}, \qquad x \geq \Delta(\tilde{\varepsilon} ).
\end{equation}

\n Define $\Delta := \max \left( \Delta(\varepsilon), \Delta(\tilde{\varepsilon} ) \right)$. For this fixed $\Delta> 0$ and from Equation~\eqref{integral equation of F-bar wrt dF_gamma}, we have
\begin{align*}
\label{chap2-proba:lemma3:F-bar splitted}
\begin{split}
\bar{F}(x) &=  \rho_{2}\bar{V}^{r}(x)+\rho_{1}f^{*}(\gamma)\bar{F}_{\gamma}(x) \\
& \quad  + \rho_{1}f^{*}(\gamma)\left\lbrace \int_{0^-}^{\Delta}\bar{B}^{r}(x-u)dF_{\gamma}(u) 
 + \int_{\Delta}^{x-\Delta}\bar{B}^{r}(x-u)dF_{\gamma}(u) + \int_{x-\Delta}^{x}\bar{B}^{r}(x-u)dF_{\gamma}(u)\right\rbrace.
\end{split}
\end{align*}

\n Besides, we have
\begin{equation*}
 \int_{0^-}^{\Delta}\bar{B}^{r}(x-u)dF_{\gamma}(u) \leq \bar{B}^{r}(x-\Delta)F_{\gamma}(\Delta),
\end{equation*}

\n and
\begin{equation*}
\int_{x-\Delta}^{x}\bar{B}^{r}(x-u)dF_{\gamma}(u) \leq \bar{F}_{\gamma}(x-\Delta) - \bar{F}_{\gamma}(x).
\end{equation*}

Then, 
\begin{align*}
\begin{split}
\bar{F}(x) &\leq  \rho_{2}\bar{V}^{r}(x)+\rho_{1}f^{*}(\gamma)\bar{F}_{\gamma}(x) + \rho_{1}f^{*}(\gamma)\bar{B}^{r}(x-\Delta)F_{\gamma}(\Delta) \\
&\qquad + \rho_{1}f^{*}(\gamma) \int_{\Delta}^{x-\Delta}\bar{B}^{r}(x-u)dF_{\gamma}(u) + \rho_{1}f^{*}\left[\bar{F}_{\gamma}(x-\Delta) - \bar{F}_{\gamma}(x) \right].
\end{split}
\end{align*}

\n Dividing the above equation by $\bar{F}$, we get
\begin{align}
\label{upper bound F-bar splitted afer make division by F-bar}
\begin{split}
1 \leq & \rho_{2}\dfrac{\bar{V}^{r}(x)}{\bar{B}^{r}(x)}+\rho_{1}f^{*}(\gamma)\dfrac{\bar{F}_{\gamma}(x)}{\bar{F}(x)} + \rho_{1}f^{*}(\gamma)\dfrac{\bar{B}^{r}(x-\Delta)}{\bar{F}(x-\Delta)}\dfrac{\bar{F}(x-\Delta)}{\bar{F}(x)}F_{\gamma}(\Delta) \\
&\qquad + \rho_{1}f^{*}(\gamma)\dfrac{1}{\bar{F}(x)} \int_{\Delta}^{x-\Delta}\bar{B}^{r}(x-u)dF_{\gamma}(u) + \rho_{1}f^{*}(\gamma)\dfrac{1}{\bar{F}(x)}\left[\bar{F}_{\gamma}(x-\Delta) - \bar{F}_{\gamma}(x) \right].
\end{split}
\end{align}

Let us remark that, $\dfrac{\bar{F}_{\gamma}(x)}{\bar{F}(x)} \leq \dfrac{e^{-\gamma x}}{f^{*}(\gamma)} $ for all $x \geq 0$, and for sufficiently large $x$
\begin{align*}
\dfrac{1}{\bar{F}(x)}\int_{\Delta}^{x-\Delta} \bar{B}^r(x-u)\, dF_{\gamma}(u) & \leq \dfrac{e^{-\gamma \Delta}}{f^*(\gamma)} \dfrac{1}{\bar{F}(x)}\int_{\Delta}^{x-\Delta} \bar{B}^r(x-u)\, dF(u)\\
&\leq \dfrac{e^{-\gamma \Delta}}{f^*(\gamma)} \dfrac{1}{\bar{F}(x)}\int_{\Delta}^{x-\Delta} \dfrac{\bar{B}^r(x-u)}{\bar{F}(x-u)} \bar{F}(x-u)\, dF(u)\\
& \leq \dfrac{e^{-\gamma \Delta}}{f^*(\gamma)} \left( \rho_1 f^*(\gamma) + \tilde{\varepsilon} \right) \dfrac{1}{\bar{F}(x)}\int_{\Delta}^{x-\Delta} \bar{F}(x-u) \, dF(u),
\end{align*}

\n  where the last inequality is due to \eqref{upper bound for the inverse B/F}. From~\eqref{upper bound F-bar splitted afer make division by F-bar} and for sufficiently large $x$

\begin{align*}
\begin{split}
1 &\leq \underset{x \rightarrow \infty}{\liminf} \left\lbrace  \rho_{2}\dfrac{\bar{V}^{r}(x)}{\bar{F}(x)}+\rho_{1}f^{*}(\gamma)\dfrac{\bar{F}_{\gamma}(x)}{\bar{F}(x)} + \rho_{1}f^{*}(\gamma)\dfrac{\bar{B}^{r}(x-\Delta)}{\bar{F}(x-\Delta)}\dfrac{\bar{F}(x-\Delta)}{\bar{F}(x)}F_{\gamma}(\Delta) \right\rbrace \\
&\qquad + \rho_1 e^{-\gamma \Delta}\left(\rho_1 f^{*}(\gamma)+ \tilde{\varepsilon}  \right) \underset{x \rightarrow \infty}{\limsup} \left\lbrace \dfrac{1}{\bar{F}(x)}\int_{\Delta}^{x-\Delta}\bar{F}(x-u)dF(u) \right\rbrace.
\end{split}
\end{align*}

\n
Thus, using the fact that $\bar{F}(x-\Delta) \sim \bar{F}(x)$ and assumption \textbf{H.2}, we find
\begin{equation*}
1 \leq \rho_{1}f^{*}(\gamma)F_{\gamma}(\Delta) \underset{x \to \infty}{\liminf} \dfrac{\bar{B}^{r}(x)}{\bar{F}(x)} + \rho_1 e^{-\gamma \Delta} \left(\rho_1 f^{*}(\gamma) + \tilde{\varepsilon}\right)\varepsilon.
\end{equation*}

\n Taking the limit as $\Delta \to \infty$, we obtain

\begin{equation*}
\underset{x \to \infty}{\liminf} \dfrac{\bar{B}^{r}(x)}{\bar{F}(x)} \geq \dfrac{1}{\rho_1 f^*(\gamma)},
\end{equation*}

\n since $F_{\gamma}(\Delta)\rightarrow 1$, as $\Delta \rightarrow \infty$, and leads to
\begin{equation*}
\underset{x \to \infty}{\limsup} \dfrac{\bar{F}(x)}{\bar{B}^{r}(x)} \leq \rho_{1}f^{*}(\gamma).
\end{equation*}
\end{proof}

\section{Proof of Theorem~\ref{chap2-proba:theorem asymptotic equivalence}}

\begin{proof}

Assume $B^r \in \mathcal{L}$ holds and satisfies condition~\eqref{regularity condition smith72}. According to Lemma~\ref{chap2-proba:lemma1 tail asymptotic:limitinf} and Lemma~\ref{chap2-proba:lemma2 tail asymptotic:limitsup}, we have
\begin{equation}
\label{chap2-proba: theorem limit ratio F-bar and B-bar}
\underset{x \rightarrow \infty}{\lim}\dfrac{\bar{F}(x)}{\bar{B}^r(x)} = \rho_1 f^*(\gamma).
\end{equation}

\n Let $y > 0$. It follows from the above limit that
\begin{equation*}
\underset{x \rightarrow \infty}{\lim}\dfrac{\bar{F}(x)}{\bar{F}(x-y)} \dfrac{\bar{F}(x-y)}{\bar{B}^r(x-y)} \dfrac{\bar{B}^r(x-y)}{\bar{B}^r(x)} = \rho_1 f^*(\gamma),
\end{equation*}
which yields $\bar{F}(x) \sim \bar{F}(x-y)$, as  $x \rightarrow \infty$. \vspace{0.2cm}

Conversely, if $F \in \mathcal{L}$ such that condition~\eqref{regularity condition smith72} holds, then by Lemma~\ref{chap2-proba:lemma1 tail asymptotic:limitinf} and Lemma~\ref{chap2-proba:lemma2 tail asymptotic:limitsup}, we get \eqref{chap2-proba: theorem limit ratio F-bar and B-bar}, which yields for fixed $y > 0$, $\bar{B}^r(x-y) \sim \bar{B}^r(x)$, as  $x \rightarrow \infty$.

\end{proof}

\end{appendices}

\bibliographystyle{plainnat}
\bibliography{biblio-general-queue}

\end{document}